\newtheorem{thrm}{Theorem}[section]
\newtheorem{prop}[thrm]{Proposition}
\newtheorem{lemma}[thrm]{Lemma}
\theoremstyle{definition}
\newtheorem{defn}[thrm]{Definition}
\theoremstyle{remark}
\newtheorem{ex}[thrm]{Example}
\newtheorem{rmk}[thrm]{Remark}
\newtheorem{notation}[thrm]{Notation}
\newcommand{\sma}{\wedge}
\newcommand{\I}{\mathcal{I}}
\newcommand{\Top}{\mathcal{T}\!\!\textit{op}}
\newcommand{\col}{\colon}
\newcommand{\grps}{\mathcal{G}}
\DeclareMathOperator{\Hom}{Hom}
\newcommand{\RR}{\mathbb{R}}
\newcommand{\KK}{\mathbb{K}}
\newcommand{\ZZ}{\mathbb{Z}}
\newcommand{\CC}{\mathbb{C}}
\newcommand{\UU}{\mathbb{U}}
\newcommand{\PP}{\mathbb{P}}
\newcommand{\Cat}{\mathcal{C}\!\textit{at}}
\newcommand{\csphere}{\mathfrak{s}}
\newcommand{\calK}{\mathcal{K}}
\newcommand{\inv}{{-1}}
\DeclareMathOperator{\Spin}{Spin}
\DeclareMathOperator{\id}{id}
\newcommand{\MSpin}{\mathit{MSpin}}
\newcommand{\MMSpin}{\mathbb{M}\mathit{Spin}}
\newcommand{\calC}{\mathcal{C}}
\newcommand{\calV}{\mathcal{V}}
\newcommand{\IsubGspa}{\mathcal{I}_G\textrm{--}\mathit{spaces}}
\DeclareMathOperator{\Id}{Id}
\newcommand{\IGspa}{\mathcal{I}\textrm{--}G\mathit{spaces}}
\newcommand{\Igrpspa}{\mathcal{I}_{\grps}\textrm{--}\mathit{spaces}}
\newcommand{\Ispa}{\mathcal{I}\textrm{--}\mathit{spaces}}
\newcommand{\xarr}[1]{\xrightarrow{#1}}
\let\oldmarginpar\marginpar
\renewcommand\marginpar[1]{\-\oldmarginpar[\raggedleft\footnotesize #1]%
{\raggedright\footnotesize #1}}
\let\c@equation\c@thrm
\numberwithin{equation}{section}
\begin{document}
\title{Global Orthogonal Spectra}
\author{Anna Marie Bohmann}
\maketitle

\begin{abstract}
For any finite group $G$, there are several well-established definitions of a $G$-equivariant spectrum.  In this paper, we develop the definition of a global orthogonal spectrum.  Loosely speaking, this is a coherent choice of orthogonal $G$-spectrum for each finite group $G$.  We use the framework of enriched indexed categories to make this precise.  We also consider equivariant $K$-theory and $\Spin^c$-cobordism from this perspective, and we show that the Atiyah--Bott--Shapiro orientation extends to the global context.
\end{abstract}

\section{Introduction}\label{sect:intro}

 Let $\grps$ be the category of finite groups and homomorphisms.  For a fixed group $G\in \grps$, we may study $G$-spaces using the techniques and methods of $G$-equivariant algebraic topology.  Many classical pieces of algebraic topology generalize to the equivariant world, although calculations are significantly  more difficult in this context.  We propose to generalize to a ``global'' equivariant world, in which we have compatible $G$-spectra across the entire category of groups.  Several classical spectra in topology do generalize naturally to this context, including complex cobordism and complex $K$-theory.  

The idea of global orthogonal spectra was inspired by the paper \cite{GM1997}, in which Greenlees and May introduce the concept of global $\I_*$-functors with smash product.  The constructions of the present work use a substantially different categorical framework from that of \cite{GM1997} so as to make the work fit into modern understandings of point-set level spectra.  

Equivariant orthogonal spectra, as defined and discussed by Mandell and May \cite{MM2002}, are  one of several good point-set level categories of equivariant spectra.  In this paper, we refine the notion of an equivariant orthogonal spectrum given in \cite{MM2002} to a global version.  We focus on the case of finite groups and genuine $G$-spectra.  In the remainder of this section, we recall the definition of an orthogonal $G$-spectrum.  In Section \ref{sec:indexedcattheory}, we define the categorical structure used in our model of global spectra; in Section \ref{sec:defnglobalspec}, we define this model.  Section \ref{sec:ktheoryspincobord} discusses the examples of complex $K$-theory and $\Spin^c$ cobordism after a paper of Joachim \cite{Joachim2004}.  Finally, in Appendices \ref{singlegroup} and \ref{sec:globalize-this}, we compare some of the categories in our construction to related categories arising in modeling spectra.

\subsection{Basic Definitions}\label{basicdefn}

Fix a compact Lie group $G$. Orthogonal $G$-spectra are usually defined in terms of diagrams on a category whose objects are representations of $G$ and whose morphisms are linear isometric isomorphisms; see \cite[Chapter 2]{MM2002}.  Although the category of all $G$-representations is large, it has a small skeleton.  We will define orthogonal $G$-spectra in terms of a particularly well-behaved skeleton of the category of representations; the author thanks Peter May for suggesting this nicely streamlined approach.

Let $\I_G$ be the category whose objects are pairs $(\RR^n, \rho\col G\to O(n))$, where $\rho$ is a homomorphism from $G$ to $O(n)$ and thus endows $\RR^n$ with the structure of a $G$-representation.  Morphisms $(\RR^m,\mu)\to (\RR^n,\rho)$ are linear isometric isomorphisms $\RR^m\to \RR^n$; hence, the group of automorphisms of $(\RR^n,\rho)$ is $O(n)$.  This means that $\I_G$ is equivalent to the usual category for indexing genuine orthogonal $G$--spectra, and we will sometimes refer to objects of $\I_G$ as ``representations.''  Since we do not require morphisms to commute with the homomorphisms $G\to O(n)$, we have a $G$-action by conjugation on the hom-spaces of $\I_G$; concretely, given $f\col (\RR^m,\mu)\to (\RR^n,\rho)$,  an element $g\in G$ acts on $f$ according to the formula
\[g.f= \RR^m \xarr{\mu(g^\inv)} \RR^m\xarr{f}\RR^n\xarr{\rho(g)}\RR^n.\]

Similarly, define $\Top_G$ to be the category whose objects are based $G$-spaces and whose morphisms are continuous based maps.  The category $\Top_G$ is enriched in based $G$-spaces via conjugation. We implicitly add disjoint basepoints to the morphism spaces in $\I_G$ so that it is also enriched in based $G$-spaces.  We denote the category of based $G$-spaces and equivariant maps by $G\Top$, so both $\Top_G$ and $\I_G$ are $G\Top$-enriched categories.

\begin{rmk} We recall the relevant terminology of enriched category theory.  By a \emph{$G$--continuous functor} between $G\Top$--enriched categories, we mean a functor preserving the $G$--space structure on hom spaces: that is, for a $G$-continuous functor $X$ and a morphism $f$, we require $gX(f)g^\inv=X(gfg^\inv)$ for all $g\in G$.  A natural $G$-transformation between functors $X,Y\col \mathcal{C}\to \mathcal{D}$ is a collection of $G$--equivariant maps $\nu\col X(C)\to Y(C)$ that commute with the morphisms in $\mathcal{C}$:
\[
\xymatrix{X(C)\ar[r]^{\nu}\ar[d]_{X(f)} & Y(C)\ar[d]_{Y(f)}\\
X(D) \ar[r]^{\nu} & Y(D)}
\]
We emphasize that $X(f)$ and $Y(f)$ need not be equivariant maps, but the components $\nu$ must be equivariant.
\end{rmk}

\begin{notation}
In order to avoid cluttering notation as much as possible, we generally omit explicit mention of the component of a natural transformation such as $\nu$ in our notation.  For example, we write $\nu\col X(C)\to Y(C)$ rather than $\nu_C$.
\end{notation}

\begin{defn}
An \emph{$\I_G$-space} is a $G$-continuous functor $X\col \I_G\to \Top_G$.  Morphisms between $\I_G$-spaces are natural $G$-transformations. 
\end{defn}
Requiring the morphisms of $\I_G$-spaces to be natural $G$-transformations with equivariant components means that the spaces of morphisms between $\I_G$-spaces do not have a $G$-action.

\begin{ex} The one-point compactification functor $V\mapsto S^V$ gives the sphere $\I_G$-space, which we denote by $S$.
\end{ex}

\begin{ex}\label{ex:Gcxcobordism}
Let $V\in\I_G$ be an $n$-dimensional $G$-representation, and let $MU(V)=T(BU_n(V\oplus V))$ be the Thom space of the canonical bundle of $n$-planes in $V\oplus V$.  The functor $MU_G\col \I_G\to \Top_G$ given by $V\mapsto MU(V)$ is an $\I_G$-space.
\end{ex}

\begin{defn} An \emph{orthogonal $G$-spectrum} is an $\I_G$-space $X$ with a natural transformation of functors $\I_G\times \I_G\to \Top_G$ \[ X(-)\sma S^{(-)}\to X(-\oplus-)\]
satisfying appropriate associativity and unitality diagrams.  In other words, an orthogonal $G$-spectrum is an $\I_G$-space with an action of the sphere $\I_G$-space.
\end{defn}

\begin{ex} The sphere $\I_G$-space $S$ is actually an orthogonal $G$-spectrum, since we have $G$-homeomorphisms $S^V\sma S^W\cong S^{V\oplus W}$ for any $G$-representations $V$ and $W$.  In fact, $S$ is a strong symmetric monoidal functor from $\I_G$ to $\Top_G$.
\end{ex}

\begin{ex} The functor $MU_G$ of Example \ref{ex:Gcxcobordism} is also an orthogonal $G$-spectrum.  The map $MU(V)\sma S^W\to MU(V\oplus W)$ arises from the Thom construction applied to the map $BU_n(V\oplus V)\to BU_{n+m}(V\oplus W\oplus V\oplus W)$ given by direct sum of an $n$-plane with the basepoint copy of $W$.    As in \cite{GM1997}, this spectrum is a model for $G$-equivariant complex cobordism. In fact $MU_G$ is also a lax symmetric monoidal functor, and thus an orthogonal ring spectrum.
\end{ex}

In the sequel, we generalize these definitions to the global case.  Heuristically, we wish to have some sort of ``fibration'' of equivariant spectra over the category $\grps$ of finite groups, so that the fiber over any $G\in \grps$ is a $G$-spectrum.  To make this idea precise, we need to introduce a categorical concept.

\section{Enriched indexed categories}\label{sec:indexedcattheory}

The main difficulty with generalizing the above definition of equivariant orthogonal spectra is understanding how to systematically treat the enrichments.  Varying the group of equivariance requires varying the category in which our representations or topological spaces are enriched: there is a natural $G$-structure on maps between $G$-spaces, but a natural $H$-structure on maps between $H$-spaces.  Fortunately, Mike Shulman \cite{Shulman2011} has systematically studied this type of varying enrichment under the name of ``enriched indexed categories.''

An enriched indexed category can be described in two equivalent ways.  For both, we require a cartesian monoidal indexing category $S$ and a monoidal fibration $\mathcal{V}\col V\to S$.  In other words, $V$ is a monoidal category, $\mathcal{V}$ is a strict monoidal functor and the monoidal product in $V$ preserves cartesian arrows, ie the fibrational structure is preserved by the product.
The fibers in $V$ will be the enriching categories.  In our case, indexing category $S$ is the category $\grps$ and $V$ is  the category $\Top_\grps$ of based spaces with an action of some group $G\in \grps$; the fibration $\Top_\grps\to \grps$ sends a space with a finite group action to the group of equivariance.  We denote the fiber of $\mathcal{V}$ over an object $X\in S$ by $\mathcal{V}_X$.  If $f\col X\to Y$ is a map in $S$, the structure of the fibration $\mathcal{V}\col V\to S$ provides a functor $f^*\col \mathcal{V}_Y\to \mathcal{V}_X$.  For any category $\mathcal{C}$ enriched in $\mathcal{V}_Y$, the notation $(f^*)_\bullet\mathcal{C}$ indicates application of the functor $f^*$ to the hom-objects of $\mathcal{C}$ so as to change the enriching category to $\mathcal{V}_X$.

\begin{defn}[{\cite{Shulman2011}}]
  An \emph{indexed $\mathcal{V}$-category} $\mathcal{C}$ consists of the following structure.
\begin{itemize}
\item[(a)] For each $X\in S$, a category $\mathcal{C}_X$ enriched in $\mathcal{V}_X$.
\item[(b)] For each morphism $f\col X\to Y$ in $ S$, a full-and-faithful $\mathcal{V}_X$-enriched functor $f^*\col (f^*)_{\bullet}(\mathcal{C}_Y)\to \mathcal{C}_X$
\item[(c)] For each composable pair of morphisms $X\xrightarrow{f} Y\xrightarrow{g} Z$ in $S$, a $\mathcal{V}_X$-natural isomorphism $(gf)^*\cong f^*\circ (f^*)_\bullet g^*$
\item[(d)] For each composable triple, the isomorphism in (c) should satisfy the appropriate compatibility condition.
\item[(e)] A unit isomorphism $\Id_{\mathcal{C}_X}\cong \id_X^*$ together with appropriate compatibility conditions.
\end{itemize}
\end{defn}

That is, for each indexing object $X\in S$, we have a category enriched in $\mathcal{V}_X$.  These categories vary coherently in $S$, and the functors relating them preserve as much of the enrichment as possible.  The categories $\I_G$ and $\Top_G$ defined in Section~\ref{sect:intro} fit together to form indexed $\Top_\grps$-categories.  
\begin{ex} $\Top_\grps$ is an indexed $\Top_\grps$-category in which the $\Top_G$-category corresponding to $G\in \grps$ is $\Top_G$ itself.  For a homomorphism $\alpha\col H\to G$ in $\grps$, the functor $(\alpha^*)_\bullet(\Top_G)\to \Top_H$ is given by restriction along $\alpha$.  These functors satisfy the required compatibility data.
\end{ex}
\begin{ex} Let $\I_\grps$ denote the indexed $\Top_\grps$-category in which the $\Top_G$-category over the index $G\in \grps$ is  $(\I_\grps)_G=\I_G$.  For each homomorphism $\alpha\col H\to G$ in $\grps$, the functor $\alpha^*\col (\alpha^*)_\bullet(\I_G)\to \I_H$ is restriction along $\alpha$, sending $(\RR^n,\rho)\in \I_G$ to $(\RR^n, \rho\circ\alpha)$.  A direct check shows this satisfies the required compatibility.
\end{ex}

Shulman proves that the following definition of a $\mathcal{V}$-fibration is equivalent to the definition of an indexed $V$-category. We follow Shulman's notation.
\begin{defn}[{\cite{Shulman2011}}] 
 Given a monoidal fibration $\mathcal{V}\col V\to S$, a \emph{large $\mathcal{V}$-category} $\mathcal{C}$ consists of the following structure. 
\begin{itemize}
\item[(a)] A collection of objects $x, y, z,\dotsc$
\item[(b)] For each $x$, an `extent' $\epsilon x\in S$.
\item[(c)] For each pair $x, y$, a hom-object $\underline{\mathcal{C}}(x,y)$ in $\calV_{\epsilon x\times\epsilon y}$
\item[(d)] For each $x$ an identities map $\I_{\epsilon x}\to \Delta^*\underline{\calC}(x,x)$ where $\I_{\epsilon x}$ is the unit of $\calV_{\epsilon x}$
\item[(e)] For each $x, y$ and $z$, a composition map
\[\underline{\calC}(y,z)\otimes_{\epsilon y}\underline{\calC}(x,y)\to \pi_2^*\underline{\calC}(x,z)\]
where $\pi_2$ is projection from $\epsilon x\times \epsilon y\times \epsilon z$ to $\epsilon x\times\epsilon z$.
\item[(f)] Composition must be associative and unital in the appropriate sense.
\end{itemize}
\end{defn}
We think of this structure as  a category that is in some sense `fibered over' $S$, equipped with a suitable fiber-wise enrichment.

\begin{ex} The category $\Top_\grps$ is also a large $\Top_\grps$-category.  This the indexed version of viewing $\Top_G$ as a $\Top_G$-enriched category.  An object of $\Top_\grps$ is a space $X$ with an action by a group $G\in \grps$; the group $G$ is its extent.  For a $G$-space $X$ and an $H$-space $Y$, the morphism object $\underline{\Top}_\grps(X,Y)=F(X,Y)$ is the $G\times H$-space of based continuous functions $X\to Y$ where the $G\times H$-action is given by conjugation in the usual way.  The identities map is the $G$-equivariant map $S^0\to F(X,X)$ sending the nonbasepoint of the $G$-fixed space $S^0$ to the identity map of $X$.  
\end{ex}

\begin{ex} The category $\I_\grps$ is also a large $\Top_\grps$-category.  Here, the objects  are pairs $V=(\RR^n,\rho)\in \I_G$ where $\rho\col G\to O(n)$ for some group $G\in \grps$; such a representation has extent $G$.  For $V\in \I_G$ and $W\in \I_H$, the morphism object $\underline{\I}_\grps(V,W)$ is the $G\times H$-space of linear isometric isomorphisms $V\to W$ which again has a $G\times H$-action by conjugation. The identities maps takes the nonbasepoint of $S^0$ to the identity element of $O(n)=\underline{\I}_\grps(V,V)$.
\end{ex}

\begin{defn}[{\cite{Shulman2011}}] 
A \emph{$\mathcal{V}$-fibration} $\mathcal{C}$ is a large $\mathcal{V}$-category such that for each object  $x$ and each morphism $f\col Y\to \epsilon x$ in $S$, there exists a restriction $f^*x$, ie an object $f^*x$ such that $\epsilon (f^*x)=Y$ and there is a natural isomorphism
\[ \underline{\mathcal{C}}(-,f^*x)\cong (f\times 1)^*\underline{\mathcal{C}}(-,x).\]
Here `natural' should be understood in a suitable $\calV$-categorical sense so that we allow for the change of enrichments when the objects in the blanks vary in extent.
\end{defn}

\begin{ex} Both $\I_\grps$ and $\Top_\grps$ are in fact $\Top_\grps$-fibrations.  In $\Top_\grps$, the restriction of a $G$-space $X$ along a homomorphism $\alpha\col H\to G$ is the $H$-space $\alpha^*X$; in $\I_\grps$, the restriction of a $G$-representation $(\RR^n, \rho)$ along $\alpha$  is given by precomposition with $\alpha$.  The required natural isomorphism, for example in the case of $\Top_\grps$, says that for any space $Z$ with an action of a finite group $K$, we have an isomorphism of $H\times K$ spaces
\[ F(Z,\alpha^*X)\cong (\alpha\times 1)^*F(Z,X)\]
that is natural in $Z$.  
\end{ex}

Shulman defines functors and natural transformations of $\mathcal{V}$-fibrations and proves a precise version of the following theorem.
\begin{thrm}[{\cite{Shulman2011}}]
\label{equivalenceofindexedcats}
The 2-categories of $\mathcal{V}$-fibrations and indexed $\mathcal{V}$-categories are equivalent.
\end{thrm}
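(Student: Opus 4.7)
The plan is to construct 2-functors in both directions and exhibit natural isomorphisms showing they are quasi-inverse. In one direction, starting from an indexed $\calV$-category $\calC$, I would build a $\calV$-fibration $\widetilde{\calC}$ whose objects are pairs $(X, c)$ with $X \in S$ and $c \in \calC_X$, assigning extent $\epsilon(X,c) = X$. For objects $(X,c)$ and $(Y,d)$, the hom-object in $\calV_{X \times Y}$ should be defined as $\underline{\calC_{X \times Y}}(\pi_1^* c, \pi_2^* d)$, where $\pi_1, \pi_2$ are the projections from $X \times Y$; the composition and units transport over from the enriched composition in each $\calC_Z$ using the coherence data (c)--(e) of the indexed structure. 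The fibration condition is immediate: for $f \colon Y \to X$, the object $f^*c \in \calC_Y$ (obtained by applying $f^* \colon (f^*)_\bullet\calC_X \to \calC_Y$ to $c$) gives the required restriction because the natural isomorphism in condition (c) translates into $\underline{\widetilde{\calC}}(-, f^*c) \cong (f \times 1)^* \underline{\widetilde{\calC}}(-, c)$.

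In the other direction, starting from a $\calV$-fibration $\calD$, I would define an indexed $\calV$-category $\calD^{\mathrm{ind}}$ by letting $\calD^{\mathrm{ind}}_X$ have as objects those $x \in \calD$ with $\epsilon x = X$ and hom-objects in $\calV_X$ given by $\Delta_X^* \underline{\calD}(x,y)$, where $\Delta_X \colon X \to X \times X$ is the diagonal. The restriction functors $f^* \colon (f^*)_\bullet \calD^{\mathrm{ind}}_X \to \calD^{\mathrm{ind}}_Y$ are provided on objects by the fibration property and on morphisms by the universal natural isomorphism characterizing $f^*x$; full-and-faithfulness of these $f^*$ follows directly from that same natural isomorphism after pulling back along the diagonal. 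The coherence isomorphisms in (c) and the unit isomorphism in (e) come from the essential uniqueness of cartesian lifts, exactly as in the classical theory of Grothendieck fibrations.

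For the 2-categorical upgrade, a functor of indexed $\calV$-categories (a family of enriched functors $\calC_X \to \calD_X$ together with coherent comparison isomorphisms with the $f^*$'s) determines a functor of $\calV$-fibrations by applying it levelwise, and conversely a functor of $\calV$-fibrations restricts on extents to such a family; natural transformations correspond similarly via their extent-indexed components. I would then verify that the two round trips $\calC \mapsto \widetilde{\calC}{}^{\mathrm{ind}}$ and $\calD \mapsto \widetilde{\calD^{\mathrm{ind}}}$ are naturally isomorphic to the identity. The first is essentially tautological since $\Delta^* \underline{\widetilde{\calC}}((X,c),(X,d)) = \Delta^* \underline{\calC_{X \times X}}(\pi_1^* c, \pi_2^* d) \cong \underline{\calC_X}(c,d)$ by the coherence in (c) applied to $\pi_i \circ \Delta = \id_X$. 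The second uses that any object $y \in \calD$ of extent $Y$ can be compared with $\pi_2^* y$ via the fibration property, yielding the required identification of hom-objects with $\underline{\calD_{X \times Y}}(\pi_1^* x, \pi_2^* y)$ after pulling back along the diagonal.

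The main obstacle I expect is not any single calculation but the sheer bookkeeping needed to verify that all of the coherence diagrams commute and that the constructions are 2-natural, particularly the interaction between the monoidal-fibration structure on $\calV \to S$ and the base-change functors $f^*$ when one composes, restricts, and re-enriches simultaneously. In particular, checking that composition in $\widetilde{\calC}$ is associative requires juggling three different base-change isomorphisms across $X \times Y \times Z$ and invoking condition (d), and this is where the indexed-category axioms are really being used. I would therefore state the theorem precisely (specifying the 2-categorical morphism data) and then organize the proof as a sequence of lemmas handling objects, hom-objects, composition, restriction, functors, and 2-cells separately, deferring the fully explicit diagrammatic verifications to Shulman's original treatment.
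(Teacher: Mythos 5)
Your proposal reproduces the same Grothendieck-style pair of constructions that the paper sketches immediately after the theorem statement — hom-objects $\mathcal{C}_{X\times Y}(\pi_1^*x,\pi_2^*y)$ in one direction and $\Delta^*\underline{\mathcal{C}}(x,x')$ in the other — and, like the paper, defers the full coherence verification to Shulman's original treatment. This matches the paper's approach; no further comparison is needed.
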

This equivalence is analogous to the equivalence between functors $J\to \Cat$ and fibrations of categories with base $J$ given by the Grothendieck construction.  Given an indexed $\mathcal{V}$-category $\mathcal{C}$, the objects with extent $X$ are the objects of the category $\mathcal{C}_X$ indexed by $X$; if $x\in \mathcal{C}_X$ and $y\in \mathcal{C}_Y$, the hom-object $\underline{\mathcal{C}}(x,y)$ is given by $\mathcal{C}_{X\times Y}( \pi_1^*x,\pi_2^*y)$.  In the other direction, if $\mathcal{C}$ is a $\mathcal{V}$-fibration, the corresponding indexed $\mathcal{V}$-category consists of the $\mathcal{V}_X$-categories $\mathcal{C}_X$ whose objects are the objects with extent $X$ and whose morphisms are given by $\mathcal{C}_X(x,x')=\Delta^*\underline{\mathcal{C}}(x,x')$ for $x,x'\in \mathcal{C}$ with extent $X$.

Theorem \ref{equivalenceofindexedcats} allows us to think of both  $\Top_\grps$ and $\I_\grps$ as fibrations over $\grps$, with enrichment varying by fiber, and also as collections of indexed categories over each object $G\in \grps$, together with appropriate compatibility data for homomorphisms $\alpha\in \grps$.  In general, our conceptual picture will be based on regarding $\Top_\grps$ and $\I_\grps$ as categories fibered over $\grps$, but we will give concrete definitions using their descriptions as indexed $\Top_\grps$-categories.

Shulman also provides a framework for considering monoidal $\mathcal{V}$-fibrations when $\calV$ is symmetric monoidal. The 2-category of $\mathcal{V}$-fibrations has a monoidal product $\otimes_f$ (`$f$' for `fiberwise') which is given by a fiberwise product of the categories over each extent.  Explicitly, if $C$ and $D$ are $\mathcal{V}$-fibrations, then the category of $C\otimes_fD$ living over an extent $X$ is the product category $C_X\otimes D_X$.  Hence we have the following definition.
\begin{defn}\label{monoidalfibrdefn}
A \emph{monoidal $\mathcal{V}$-fibration} is a monoid in the 2-category of $\mathcal{V}$-fibrations under $\otimes_f$.
\end{defn}   
Unraveling this definition shows that a monoidal $\mathcal{V}$-fibration is just a $\mathcal{V}$-fibration  $\mathcal{C}$ such that each $\mathcal{V}_X$-category $\mathcal{C}_X$ is monoidal and the transition functors $f^*$ together with their coherences are strong monoidal.

Note that one can also define a non-fiberwise monoidal product on $\mathcal{V}$-categories where objects are just pairs of objects, regardless of whether these have the same extent. There is a symmetric monoidal biequivalence between $\mathcal{V}$-fibration with the fiberwise product and $\mathcal{V}$-categories with this external tensor product \cite{Shulman2011}.
Hence it suffices to consider the fiberwise tensor product, which is more suitable for our indexed set up.

\section{Defining global spectra}\label{sec:defnglobalspec}
Our definition of global orthogonal spectra runs essentially parallel to the definition of orthogonal $G$-spectra recalled in Section \ref{basicdefn}, except everything will be fibered over $\grps$.  To emphasize that $\grps$-enrichment has replaced $G$-enrichment, we call our global spectra `orthogonal $\grps$--spectra.'
\begin{rmk} For categorical reasons, we require that the indexing category be cartesian closed.  Throughout, we will use the category $\grps$ of all finite groups, but we could easily restrict to finite $p$-groups for a prime $p$.  While the category of all compact Lie groups is cartesian closed, we restrict to finite groups at present because of difficulties with finding coherent ways of restricting representations.
\end{rmk}

\begin{defn}\label{defn:Igrpspa}  An \emph{$\I_\grps$--space} is a functor of enriched indexed categories $A\col\I_\grps\to \Top_\grps$. Concretely, the functor $A$ consists of $G$-continuous functors $A_G\col \I_G\to \Top_G$ for all $G\in\grps$ such that for each homomorphism $\alpha\col G\to H$, there is a natural isomorphism of $G$-continuous functors
\begin{equation}\label{grpscompatibility}
\xymatrix{(\alpha^*)_\bullet \I_H\ar[r]^{A_H}\ar[d]_{\alpha^*} & (\alpha^*)_\bullet \Top_H \ar[d]_{\alpha^*}\\
\I_G\ar[r]^{A_G} & \Top_G}
\end{equation}
Denote this natural transformation by $\phi^A_\alpha\col \alpha^*\circ A_H\Rightarrow A_G\circ \alpha$.
These natural isomorphisms must be compatible with composition of group homomorphisms, so that $\phi^A_\alpha\circ\phi^A_\beta=\phi^A_{\alpha\beta}$. They must also satisfy the unit condition $\phi_{\id}=\id$

Morphisms of $\I_\grps$--spaces are natural transformations of enriched indexed functors. As such, a morphism $f\col A\to B$ consists of natural $G$--transformations $f_G\col A_G\to B_G$ for each $G\in \grps$, and these natural transformations are required to commute with the natural isomorphisms in (\ref{grpscompatibility}). In other words, we have a commutative diagram of natural transformations
\begin{equation}\label{phicompatiblenattrans}
\xymatrix{ \alpha^*\circ A_H\ar@{=>}[r]^{\alpha^*f_H} \ar@{=>}[d]_{\phi^{A}_\alpha} & \alpha^*\circ B_H\ar@{=>}[d]^{\phi^B_\alpha} \\
A_G\circ \alpha^*\ar@{=>}[r]^{f_G\circ\alpha} & B_G \circ \alpha^*}
\end{equation}
 We wish to emphasize that, as $f$ is a natural $G$-transformation, for each $V\in \I_G$, $f_G\col A_G(V)\to B_G(V)$ must be a $G$-map.  These morphisms make $\Igrpspa$ into a topologically enriched category.  
\end{defn}

\begin{rmk} \label{IgrpspaceIspacermk} Applying the compatibility condition of Equation (\ref{grpscompatibility}) to the homomorphism $\iota\col G\to 
e$ yields a natural isomorphism
\[ A_G\circ \iota^* \cong \iota^* A_e\]
which shows that if $V$ is a trivial $G$-representation (so that $V$ is contained in $\iota^*(\I_e)\subset \I_G$), then $A_G(V)$ has a trivial $G$-action.  This observation is the essential point to proving an equivalence between $\I$-spaces and $\I_\grps$-spaces, which we carry out in Appendix \ref{sec:globalize-this}. 
\end{rmk}

\begin{ex}\label{ex:globalsphere} The most fundamental example is the one-point compactification sphere functor $S$, which sends a representation $V$ to its one-point compactification $S^V$.  If $V\in \I_G$ is a $G$-representation and $\alpha\col H\to G$ is a homomorphism, we have a homeomorphism of $H$-spaces $\alpha^*(S^V)\cong S^{\alpha^*V}$, which is the heart of the required isomorphism (\ref{grpscompatibility}) of functors.  We will just call this the sphere functor.
\end{ex}

\begin{ex}\label{ex:globalcxcobord} The complex cobordism functors of Example \ref{ex:Gcxcobordism} also form an $\I_\grps$-space.  If $V\in \I_G$ is a $G$-representation and $\alpha\col H\to G$ is a homomorphism, the required homeomorphism of $H$-spaces $\alpha^*MU(V)\to MU(\alpha^*V)$ is the identity.
\end{ex}

We now turn to understanding the monoidal structures involved.  Recall from Section \ref{sec:indexedcattheory} that a monoidal $\Top_\grps$-fibration is a $\Top_\grps$-fibration $\mathcal{C}$ with a monoidal structure on each category $\mathcal{C}_G$ and strong monoidal transition functors $\alpha^*$.  Both $\Top_\grps$ itself and $\I_\grps$ are examples. The smash product of based spaces provides the monoidal structure on each category $\Top_G$ and it is clear that changing the group action via restriction is strongly monoidal.  The monoidal structure on $\I_G$ is given by direct sum together with the canonical identification $\RR^n\oplus\RR^m\cong \RR^{n+m}$.  More explicitly, $(\RR^n,\rho)\oplus (\RR^m,\mu)=(\RR^{n+m},\rho\oplus \mu)$, where $\rho\oplus\mu\col G\to O(n+m)$ is the homomorphism that sends $g\in G$ to the block matrix 
\[\begin{bmatrix} \rho(g) &0\\0& \mu(g) \end{bmatrix}.\]
Precomposition with a homomorphism $\alpha\col H\to G$ is strong monoidal: $\alpha^*(\rho\oplus \mu)=\alpha^*\rho\oplus\alpha^*\mu$.

The monoidal structure on $\I_\grps$ allows us to make the following definition.
\begin{defn}\label{globalspecdef} An \emph{orthogonal $\grps$-spectrum} is an $\I_\grps$-space $A$ together with an associative and unital action of the sphere functor.  In other words, there is a natural transformation of $\Top_\grps$ functors $\I_\grps\otimes_f \I_\grps\to \Top_\grps$ whose component at $(V,W)\in \I_\grps\otimes_f \I_\grps$ is 
\[\sigma_{(V,W)}\col A(V)\sma S^W\to A(V\oplus W) \]
\end{defn}

This definition uses an external notion of smash product on $\I_\grps$-spaces.  There is an internal smash product given by left Kan extension in the usual way.
\begin{defn} The (internal) \emph{smash product} of two $\I_\grps$-spaces $A$ and $B$ is the left Kan extension of their external smash product along the $\oplus$ functor $\I_\grps\otimes_f \I_\grps\to \I_\grps$.
\end{defn}

Such left Kan extensions exist because the categories $\I_\grps$ and $\Top_\grps$ are both well-behaved colimit-wise \cite{Shulman2011}.

Having  monoidal structures on $\I_\grps$ and $\Top_\grps$ allows us to define ring orthogonal $\grps$-spectra. If $A\col \I_\grps\to \Top_\grps$ is a lax monoidal functor of $\Top_\grps$-fibrations, then the unit $\eta\col S\to A$ and multiplication $\mu\col A(-)\sma A(-)\to A(-\oplus-)$ give $A$ the structure of an orthogonal $\grps$-spectrum via
\begin{equation}\label{laxmonoidalyieldsspectrum} A(V)\sma S^W\xrightarrow{\id\sma \eta} A(V)\sma A(W) \xrightarrow{\mu} A(V\oplus W). 
\end{equation}
Thus any lax monoidal functor $\I_\grps\to \Top_\grps$ gives a global orthogonal spectrum.  Both the sphere spectrum and complex cobordism, of Examples~\ref{ex:globalsphere} and \ref{ex:globalcxcobord}, are examples.  We will define two more such functors in the next section.

\section{$K$-theory and $\Spin^c$-cobordism}\label{sec:ktheoryspincobord}

For an arbitrary compact Lie group $G$, Michael Joachim constructs $G$-equivariant $K$-theory as an orthogonal $G$--spectrum \cite{Joachim2004}.  He also constructs an orthogonal version of $\MSpin^c_G$ and a map of orthogonal spectra realizing the complex orientation on $K$-theory.  We show that these orthogonal spectra for different groups $G$ in fact form an orthogonal $\grps$--spectrum.

For each compact Lie group $G$, Joachim constructs a $\I_\grps$--space $\KK_G$  using $\ZZ/2\ZZ$--graded  $C^*$--algebras.  He then shows that $\KK_G$ in fact represents $G$-equivariant complex $K$--theory.  Surprisingly, this is the only known $E_\infty$--version of equivariant complex $K$--theory when $G$ is a compact Lie group; for finite $G$, the $E_\infty$--structure on $K_G$ follows from \cite[Remark VII.4.4]{EKMM1997}.

\begin{defn}[Joachim {\cite{Joachim2004}}]\label{def:GKthry} Let $G$ be a compact Lie group.  Then $\KK_G\col \I_G\to \Top_G$ is a lax symmetric monoidal functor defined by $\KK_G(V)=\Hom_{C^*}(\csphere,\CC l_V\otimes \calK_V)$, where $\CC l_V$ is the Clifford algebra of $V$ and $\calK_V$ is the $G$-$C^*$--algebra of compact operators on $L^2(V)$.  The $C^*$--algebra $\csphere$ is the graded $G$-$C^*$--algebra of continuous functions on $\RR$ vanishing at infinity with trivial $G$--action.  The multiplication of lax symmetric monoidal structure arises from a comultiplication $\Delta\col \csphere\to \csphere\otimes\csphere$; the unit maps will be discussed in detail below.
\end{defn}

The $C^*$--algebra $\csphere$ is a $\ZZ/2\ZZ$ graded $C^*$--algebra with grading given by the even and odd functions. It admits a unique nontrivial $*$-homomorphism to $\CC$ given by evaluation at $0\in \RR$. Moreover, $\csphere$ is generated by the two functions $u(t)=(1+t)^\inv$ and $v(t)=t(1+t)^\inv$. Under the identification of $\csphere\otimes\csphere$ with the $C^*$--algebra of functions on $\RR^2$ vanishing at infinity, the comultiplication on $\csphere$ is defined on the generating functions by
\begin{equation*}
\Delta(u)(x,y)=\frac{1}{1+x^2+y^2} \textrm{\quad and\quad} \Delta(v)(x,y)=\frac{x+y}{1+x^2+y^2}.
\end{equation*}
This comultiplication is both coassociative and cocommutative, and this structure makes Joachim's construction of $\KK_G$ satisfy the associativity and commutativity required for a lax symmetric monoidal functor.

We restrict to the category of finite groups so that we may continue to use our skeletal category $\I_\grps$.

\begin{thrm}\label{ktheoryisglobal} The $G$--functors $\KK_G$ of Definition \ref{def:GKthry} define an orthogonal $\grps$--spectrum.
\end{thrm}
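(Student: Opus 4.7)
The plan is to verify that the collection $\{\KK_G\}_{G\in\grps}$ assembles into a lax monoidal functor of $\Top_\grps$-fibrations $\KK\col\I_\grps\to\Top_\grps$, after which Equation~(\ref{laxmonoidalyieldsspectrum}) produces the desired orthogonal $\grps$-spectrum structure.

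First I would exhibit the natural isomorphisms $\phi^\KK_\alpha$ required by Definition~\ref{defn:Igrpspa}. For a homomorphism $\alpha\col H\to G$ and an object $V=(\RR^n,\rho)\in\I_G$, the Clifford algebra $\CC l_V$ and compact operators $\calK_V=\calK(L^2(V))$ depend only on the underlying inner product space $\RR^n$; the $G$-action on each is induced functorially from $\rho$, while $\csphere$ carries the trivial action. Restriction along $\alpha$ therefore gives identifications
\[\alpha^*(\CC l_V)=\CC l_{\alpha^*V},\qquad \alpha^*(\calK_V)=\calK_{\alpha^*V}\]
as $H$-$C^*$-algebras, and since $G$ acts on $\Hom_{C^*}(\csphere,\CC l_V\otimes\calK_V)$ by conjugation, restriction of this conjugation action along $\alpha$ coincides with the conjugation $H$-action on $\KK_H(\alpha^*V)$. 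Thus $\phi^\KK_\alpha$ can be taken to be the identity on the underlying space, and the cocycle condition $\phi^\KK_\alpha\circ\phi^\KK_\beta=\phi^\KK_{\alpha\beta}$ together with the unit condition $\phi^\KK_{\id}=\id$ hold trivially.

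Next I would check that Joachim's lax symmetric monoidal structure on each $\KK_G$ is compatible with these $\phi^\KK_\alpha$. The multiplication map $\KK_G(V)\sma\KK_G(W)\to\KK_G(V\oplus W)$ is built from the comultiplication $\Delta\col\csphere\to\csphere\otimes\csphere$ (which involves no group action) together with the canonical isomorphisms $\CC l_V\otimes\CC l_W\cong\CC l_{V\oplus W}$ and $\calK_V\otimes\calK_W\cong\calK_{V\oplus W}$, both of which are natural in the underlying inner product spaces and so automatically commute with precomposition by $\alpha$. Joachim's unit map $S\to\KK$ has an analogous naturality property. Diagram~(\ref{phicompatiblenattrans}) applied to the multiplication and unit therefore reduces to checking identities on underlying spaces, exhibiting $\KK$ as a lax monoidal functor of $\Top_\grps$-fibrations.

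Finally, applying Equation~(\ref{laxmonoidalyieldsspectrum}) produces the required action of the sphere functor, completing the construction of the orthogonal $\grps$-spectrum. The main obstacle is not any deep topological input but rather careful bookkeeping: one must verify precisely that every piece of Joachim's construction---the $G$-action on hom-sets, the unit, the Clifford multiplication, and the monoidal structure maps---depends only on the pair $(\RR^n,\rho)$ in a way that is manifestly natural under precomposition by homomorphisms $H\to G$. Once this functoriality is confirmed, the remainder of the argument is formal.
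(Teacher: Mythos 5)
Your proposal takes essentially the same approach as the paper's proof: both argue that $\csphere$ carries the trivial action and the $G$-action on $\CC l_V\otimes\calK_V$ is built functorially from the inner product structure of $V$, so that the transition isomorphisms $\phi^\KK_\alpha$ are identities on underlying spaces, the indexed-category and lax monoidal compatibility conditions hold trivially, and Equation~(\ref{laxmonoidalyieldsspectrum}) finishes the argument. The paper does carry out the ``careful bookkeeping'' you flag but leave implicit --- in particular verifying that conjugation preserves the $L^2$-norm and hence the compactness of operators, so that $\alpha^*\calK_V=\calK_{\alpha^*V}$ really holds as $H$-$C^*$-algebras.
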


The heart of this statement is in understanding the interplay of $\KK_G$ with the restriction functors $\alpha^*$. 

\begin{proof}
Given $\alpha\colon H\to G$ and an $G$--representation $V\in \I_G$, we show that \[\KK_H(\alpha^*V)=\alpha^*\KK_G(V).\] for any $\alpha\col H\to G$. This follows from examination of how the equivariance arises in Joachim's construction. 

More explicitly, for $V=(\RR^n,\rho)$, the action of $G$ on the space 
\[\KK_G(V)=\Hom_{C^*}(\csphere,\CC l_V\otimes \mathcal{K}_V)\]
 is exclusively via the action of $G$ on $\CC l_V\otimes \mathcal{K}_V$,  since the $C^*$--algebra $\csphere$ is fixed. Hence we check that $\alpha^*(\CC l_V\otimes \mathcal{K}_V)$ is equal to $\CC l_{\alpha^*V}\otimes \mathcal{K}_{\alpha^*V}$ as $H$-$C^*$--algebras. As $\alpha^*(\CC l_V\otimes \mathcal{K}_V)= \alpha^*\CC l_V\otimes \alpha^* \mathcal{K}_V$, we check the equality for the Clifford algebra and compact operator pieces separately.   The action on the Clifford algebra $\CC l_V$ is simply the action of $G$ on $V$ extended to the tensor algebra; this action passes to the quotient $\CC l_V$ because $G$ acts via the orthogonal group on $\RR^n$ and thus preserves the standard inner product.  Hence the action of $H$ on $\alpha^*\CC l_V$ coincides with the extension of the action of $H$ on $\alpha^*V$ to the tensor algebra.  Thus $\CC l_{\alpha^*V}=\alpha^*{\CC l_V}$ as $H$-$C^*$--algebras.  

We next check that $\mathcal{K}_{\alpha^*V}=\alpha^*\mathcal{K}_V$.  The $G$-$C^*$--algebra $\mathcal{K}_V$ is the $C^*$--algebra of compact operators on the completion of the pre-Hilbert space of $L^2$ functions vanishing at infinity on $V$.  Again, since the $G$-action on $\RR^n$ preserves the inner product, the $C^*$-algebra inherits a $G$-action.

To be concrete,  $G$ acts on the space of functions from $V$ to $\CC$ by conjugation, which since $\CC$ is $G$ fixed, reduces to $g.f\mapsto f(g^{-1}(-))$.  Because $G$ preserves the inner product on $V$, it preserves norms in $V$ and thus the property of vanishing at infinity.  Similarly, the $G$--action preserves the $L^2$ norm on the function space: for an $L^2$ function $f\colon V\to \CC$ and an element $g\in G$, we find
\[\int_V \langle f(g^{-1}v),f(g^{-1}v)\rangle=\int_V\langle f(v),f(v)\rangle\]
since $g^{-1}$ acts invertibly on $V$.  Thus we have an $G$--action on $L^2(V)$, which induces an $G$--action by conjugation on operators from $L^2(G)$ to itself. This action preserves compact operators: an operator $T$ is compact if and only if the image of any bounded sequence under $T$ contains a convergent subsequence.  If $T$ is such an operator, then $g\circ T\circ g^{-1}$ is also compact.  For consider a bounded sequence $\{f_i\}$ of functions in $L^2(V)$.  Since $G$ preserves the $L^2$ norm, the sequence $\{ g^{-1}.f_i\}$ is also bounded, so its image $\{T(g^{-1}.f_i)\}$ contains a convergent subsequence, which is Cauchy.  Thus the corresponding subsequence of $\{gT(g^{-1}.f_i)\}$ is Cauchy and thus converges by completeness.

Hence we see that the $G$--action on $\mathcal{K}_V$ is induced by the fact that $G$ acts via orthogonal transformations on $\RR^n$ and thus preserves the inner product structure.  Tracing through this action shows that the $H$--inner product on $\alpha^*V$ is simply the restriction of the $G$--inner product to the image of $\alpha$ inside $G$.  It is thus apparent that $\alpha^*\mathcal{K}_V= \mathcal{K}_{\alpha^*V}$.  This shows that $\KK$ defines an $\I_\grps$-space 
\[\KK\col \I_\grps \to \Top_\grps.\]

It remains to show that $\KK$ is in fact an orthogonal $\grps$-spectrum.  In fact $\KK$ is a lax monoidal functor of $\Top_\grps$-enriched categories $\KK\col \I_\grps\to \Top_\grps$ in the sense of Definition \ref{monoidalfibrdefn}.  Joachim shows that for each fixed $G$, the functor $\KK_G\col \I_G\to \Top_G$ is lax monoidal; the discussion of monoidal structures in \cite{Shulman2011} implies that for our $\Top_\grps$ fibrations $\I_\grps$ and $\Top_\grps$, a lax monoidal functor of $\Top_\grps$ fibrations is equivalent to a lax monoidal functor $\I_G\to \Top_G$ for each $G$, plus appropriate compatibility for the transformations $\phi$.  Since the transformations $\phi^\KK$ are the identity on the point-set level, we certainly have all the required compatibility.  Hence the structure maps of Definition~\ref{globalspecdef} are given by composing the unit and multiplication natural transformations of the lax monoidal functor $\KK$, as in Equation (\ref{laxmonoidalyieldsspectrum}).
\end{proof}

We next turn to $\MSpin^c$.  Again, the construction of Joachim \cite{Joachim2004} generalizes to a global spectrum.   Since his construction is in still in the framework of $C^*$-algebras, we first establish some terminology.

For an inner product space $V\in\I_G$, the subgroup $Pin^c_V\subset \CC l_V$ is generated by the unit sphere $S(V)\subset \CC l_V$ together with the elements of the unit circle $S^1\subset \CC\subset \CC l_V$.  The $\ZZ/2$-grading of $\CC l_V$ restricts to a grading homomorphism $\xi\col Pin^c_V\to \ZZ/2$ and $Pin^C_V$ acts on $\CC l_V$ via conjugation twisted by $\xi$.  This action restricts to an isometry on  $V\subset \CC l_V$ and thus defines a group homomorphism $\varrho_V\col Pin^c_V\to O_V$ which is known to be surjective.

Let $B(\CC l_V\otimes L^2(V))$ be the space of bounded operators on $\CC l_V\otimes L^2(V)$ under the strong $*$-topology. Note that the space of compact operators on $\CC l_V\otimes L^2(V)$ is isomorphic to $\CC l_V\otimes \calK_V$. Let $\UU_V\subset B(\CC l_V\otimes L^2(V))$ be the subgroup of unitary operators and scalars, which comes with a grading homomorphism $\xi\col\UU_V\to \ZZ/2$.  The group $\UU_V$ acts on $B(\CC l_V\otimes L^2(V))$ via conjugation with twisting by $\xi$, and this action factors through the projective group $\PP\UU_V=\UU_V/S^1$.    We construct a map $j_V\col O_V\to \PP\UU_V$ by associating to each $e\in Pin^c_V$ the unitary operator $U_e\in B(\CC l_V\otimes L^2(V))$ defined by
\[ U_e(v\otimes f)=ev\otimes (f\circ \varrho_V(e)^\inv) \]
for $v\in \CC l_V$ and $f\in L^2(V)$.  This defines a homomorphism $Pin^c_V\to \UU_V$.  The kernel of the homomorphism $\varrho_V\col Pin^c_V\to O_V$ maps to $S^1\subset \UU_V$; the quotient thus gives the homomorphism $j_V\col O_V\to \PP\UU_V$.

Joachim's model of $\MSpin_G^c$ is the following.
\begin{defn}[{\cite[Definition 6.3]{Joachim2004}}]\label{mspindef} The equivariant spectrum $\MSpin_G^c$ is modeled by the lax monoidal functor $\MMSpin_G^c\col \I_G\to \Top_G$ 
which is defined as $\MMSpin_G^c(V)={\PP\UU_V}_+\sma_{O_V}S^V.$  The  monoidal structure is induced by the natural maps ${\UU_V}_+\sma{\UU_W}_+\to \UU_{V\oplus W}$ and $S^V\sma S^W\to S^{V\oplus W}$ and the unit is given by
\begin{equation}\label{spinunitmap}\eta\col S^V\cong {O_V}_+\sma_{O_V}\xrightarrow{j_V\sma\id} {\PP\UU_V}_+\sma_{O_V} S^V.
\end{equation}
\end{defn}

As $G$ varies, these models $\MMSpin^c_G$ fit together to form an orthogonal $\grps$-spectrum.
\begin{thrm}
The functors $\MMSpin_G^c$ of Definition \ref{mspindef} form a lax monoidal functor $I_\grps\to \Top_\grps$ and thus define an orthogonal $\grps$-spectrum.
\end{thrm}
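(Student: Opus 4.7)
The plan is to mimic the strategy of Theorem \ref{ktheoryisglobal}: establish that every ingredient in the definition of $\MMSpin^c_G(V)$ is constructed from the underlying inner product space data of $V$ (together with the $G$-action through orthogonal transformations), so that restriction along a homomorphism $\alpha\col H\to G$ commutes on the point-set level with the formation of $\MMSpin^c$. Then verify that the lax monoidal structure of Joachim is compatible with these identifications, which will be automatic since the coherence transformations $\phi^{\MMSpin^c}_\alpha$ will turn out to be identities on underlying spaces.

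First I would fix $\alpha\col H\to G$ and $V=(\RR^n,\rho)\in \I_G$ and verify that $\alpha^*\MMSpin_G^c(V)=\MMSpin_H^c(\alpha^*V)$ piece by piece in the smash product $({\PP\UU_V})_+\sma_{O_V}S^V$. The orthogonal group $O_V=O(n)$ has the same underlying group for $V$ and $\alpha^*V$, with $H$-action obtained from the $G$-action by restriction. The Hilbert space $L^2(V)$, the Clifford algebra $\CC l_V$, and hence $\CC l_V\otimes L^2(V)$ all depend only on the underlying inner product space of $V$; the proof of Theorem \ref{ktheoryisglobal} already verified that $\alpha^*(\CC l_V\otimes\calK_V)=\CC l_{\alpha^*V}\otimes\calK_{\alpha^*V}$ using only that $G$ acts by inner-product-preserving transformations, and the same argument applies to the strong $*$-topology bounded operator algebra $B(\CC l_V\otimes L^2(V))$ and its subgroup $\UU_V$ of unitaries and scalars. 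In particular $\alpha^*\PP\UU_V=\PP\UU_{\alpha^*V}$ as $H$-groups. Similarly, $Pin^c_V$, the grading homomorphism $\xi$, the surjection $\varrho_V\col Pin^c_V\to O_V$, and the induced map $j_V\col O_V\to \PP\UU_V$ are all built from the same inner product data, so $\alpha^*j_V=j_{\alpha^*V}$. Combined with the $H$-homeomorphism $\alpha^*S^V\cong S^{\alpha^*V}$ of Example \ref{ex:globalsphere} and the fact that restriction commutes with the colimit forming $\sma_{O_V}$, this yields the identification $\alpha^*\MMSpin^c_G(V)=\MMSpin^c_H(\alpha^*V)$, giving the required natural isomorphism $\phi^{\MMSpin^c}_\alpha$ as the identity on underlying spaces. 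Compatibility with composition and units follows at once because the identifications are literally identities.

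Having established that $\MMSpin^c$ is an $\I_\grps$-space, I would next upgrade this to a lax monoidal functor of $\Top_\grps$-fibrations. Joachim proves that each $\MMSpin^c_G\col \I_G\to \Top_G$ is lax monoidal, with multiplication induced by the natural maps ${\UU_V}_+\sma {\UU_W}_+\to \UU_{V\oplus W}$ and $S^V\sma S^W\to S^{V\oplus W}$ and with the unit map $\eta$ of Equation (\ref{spinunitmap}). By the same principle invoked at the end of the proof of Theorem \ref{ktheoryisglobal}, upgrading a family of lax monoidal functors $\I_G\to \Top_G$ to a lax monoidal functor of $\Top_\grps$-fibrations requires only that the coherences $\phi^{\MMSpin^c}_\alpha$ be monoidal natural transformations with respect to the monoidal structures on each fiber. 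Since these $\phi$'s are identities on the point-set level, this compatibility is automatic. The resulting orthogonal $\grps$-spectrum structure is given by the composite in Equation (\ref{laxmonoidalyieldsspectrum}).

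The main obstacle I expect is the bookkeeping around the quotient $(\PP\UU_V)_+\sma_{O_V}S^V$: I need to know that forming this quotient genuinely commutes with restriction, which reduces to checking that the $O_V$-action on $\PP\UU_V$ is itself built from inner-product data via $j_V$, so that $O_V$ and $O_{\alpha^*V}$ act through the same map once the inner product space is fixed. Everything else is a naturality statement for constructions from inner product spaces, which is standard; the only conceptual content is the same observation that drove the $K$-theory case, namely that the $G$-equivariance enters only through the action on $V$ by orthogonal transformations and is therefore preserved verbatim by precomposition with $\alpha$.
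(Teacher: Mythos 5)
Your proof takes essentially the same approach as the paper's: observe that the change-of-group functors $\alpha^*$ act as the identity on the underlying spaces because every constituent of $\MMSpin^c_G(V)$ is built from the inner-product structure of $V$, so the coherence isomorphisms $\phi^{\MMSpin^c}_\alpha$ are identities, and then cite Joachim's fiberwise lax monoidal structure together with the automatic compatibility to conclude. The paper's version is terser, simply referencing the proof of Theorem \ref{ktheoryisglobal}, whereas you spell out the piece-by-piece verification for $O_V$, $\PP\UU_V$, $j_V$, and the smash quotient, but the argument is identical in substance.
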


\begin{proof}
As in the proof of Theorem \ref{ktheoryisglobal}, the change of groups functors $\alpha^*$ are the identity on the underlying spaces involved.  Again, this is because the inner product is assumed to be equivariant, and so the definitions of unitary operators and the like are all respected by the group actions in question.  Thus $\MSpin^c$ defines an $\I_\grps$-space, and the lax monoidal structure follows from this structure at each group individually.
\end{proof}

Finally, we note that there is a map of lax monoidal $\I_\grps$-spaces $\MMSpin^c\to \KK$ which, for $V\in\I_G$, is induced by the map 
\[\tilde{\gamma}_V\col {\UU_V}_+\sma S^V\to \Hom_{\CC^*}(\csphere,\CC l_V\otimes \calK_V)\]
 defined by $\tilde{\gamma}_V(U,v)=\xi(U)U\eta_V(v)U^*$.  The map $\eta$ of Equation (\ref{spinunitmap}) is $O_V$-equivariant by construction, so $\tilde{\gamma}_V$ induces the desired map
\[\gamma_V\col {\PP\UU_V}_+\sma_{O_V} S^V\to \Hom(\csphere,\CC l_V\otimes \calK_V)\]
Note that we are using the identification of $\CC l_V\otimes \calK_V$ with compact operators on $\CC l_V\otimes L^2(V)$ in order to define the action of $U\in\UU_V$ on $\CC l_V\otimes \calK_V$.  Again, since all the change-of-group functors $\alpha^*$ are the identity on underlying spaces, the maps $\gamma_V$ determine a natural transformation of functors of $\Top_\grps$-indexed categories.  As Joachim proves that at each group the natural transformation $\gamma$ is a model for the Atiyah--Bott--Shapiro orientation of complex K-theory \cite[Theorem 6.9]{Joachim2004}, we arrive at the following conclusion.

\begin{thrm}
The Atiyah--Bott--Shapiro orientation $\MSpin^c\to K$ extends to an orientation of orthogonal $\grps$-spectra.
\end{thrm}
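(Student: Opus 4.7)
The plan is to promote Joachim's group-level maps $\gamma_V$ to a morphism of orthogonal $\grps$-spectra and then invoke Joachim's theorem to identify it with the ABS orientation at each group. By the discussion following Definition~\ref{globalspecdef}, a morphism of lax monoidal functors of $\Top_\grps$-fibrations between $\MMSpin^c$ and $\KK$ automatically induces a morphism of the associated orthogonal $\grps$-spectra, so it suffices to produce such a lax monoidal natural transformation.

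First I would assemble the maps $\gamma_V$ into a morphism of $\I_\grps$-spaces. For each $G \in \grps$ and each $V \in \I_G$, the map $\gamma_V\col {\PP\UU_V}_+\sma_{O_V} S^V\to \Hom_{C^*}(\csphere,\CC l_V\otimes \calK_V)$ is already defined, and Joachim shows it is a natural $G$-transformation $\MMSpin^c_G \Rightarrow \KK_G$ for each fixed $G$. What remains at this stage is to verify the compatibility diagram~(\ref{phicompatiblenattrans}) for each homomorphism $\alpha\col H \to G$. Here I would exploit the point made in the proofs of Theorem~\ref{ktheoryisglobal} and its $\MSpin^c$-analogue: the structure isomorphisms $\phi^{\MMSpin^c}_\alpha$ and $\phi^\KK_\alpha$ are the identity on underlying spaces, because $\alpha^*$ acts as the identity on the Clifford algebras, Hilbert spaces, and unitary groups in question and only reinterprets the group action. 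The formula $\tilde\gamma_V(U,v) = \xi(U)U\eta_V(v)U^*$ is manifestly defined in terms of the inner-product structure on $V$ and operator-theoretic data that are preserved under restriction, so $\gamma_{\alpha^*V}$ coincides with $\alpha^*\gamma_V$ on the nose. This gives the required commuting square at each $V$.

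Next I would check that this morphism of $\I_\grps$-spaces is lax monoidal. Again using the principle from Section~\ref{sec:indexedcattheory} that lax monoidal functors (and transformations) of $\Top_\grps$-fibrations are detected groupwise together with compatibility for the $\phi$'s, it suffices to check, for each $G$, that $\gamma$ commutes with the multiplication and unit maps of $\MMSpin^c_G$ and $\KK_G$, and then to verify the monoidal compatibility with restriction. The groupwise statement is exactly what Joachim establishes in \cite[\S 6]{Joachim2004}: $\gamma$ intertwines the unit coming from $\eta_V$ with Joachim's unit for $\KK_G$, and intertwines the multiplication on $\MMSpin^c_G$ induced by the maps $\UU_V \sma \UU_W \to \UU_{V \oplus W}$ and $S^V \sma S^W \to S^{V \oplus W}$ with the comultiplication $\Delta\col \csphere\to\csphere\otimes\csphere$. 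The compatibility with the $\phi$'s is then automatic because all $\phi$'s are identities on underlying spaces, so the pentagon/square relating them to monoidal coherences collapses to the groupwise relation.

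Finally, since a morphism of lax monoidal $\I_\grps$-spaces yields, via Equation~(\ref{laxmonoidalyieldsspectrum}), a morphism of the underlying orthogonal $\grps$-spectra respecting the sphere action, and since Joachim's \cite[Theorem 6.9]{Joachim2004} identifies $\gamma$ at each $G$ with the ABS orientation, we obtain the desired global orientation. The main obstacle, in principle, would be checking naturality of $\gamma$ under the restriction functors $\alpha^*$; in practice, this is largely formal thanks to the observation in the proofs of Theorem~\ref{ktheoryisglobal} and the $\MSpin^c$-theorem that $\alpha^*$ acts trivially on the underlying point-set data, so the content of the proof reduces to invoking Joachim's groupwise results and the unravelling of Shulman's framework recorded earlier.
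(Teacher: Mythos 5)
Your proposal matches the paper's argument: both assemble Joachim's maps $\gamma_V$ (induced from $\tilde\gamma_V$) into a map of lax monoidal $\I_\grps$-spaces, use the observation that the change-of-group functors $\alpha^*$ and hence the $\phi$'s are the identity on underlying spaces to get the compatibility of Diagram~(\ref{phicompatiblenattrans}) essentially for free, and cite \cite[Theorem 6.9]{Joachim2004} to identify the result groupwise with the Atiyah--Bott--Shapiro orientation. You merely spell out the groupwise lax-monoidality checks and the passage to a map of $\grps$-spectra via Equation~(\ref{laxmonoidalyieldsspectrum}) in slightly more detail than the paper does.
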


\appendix

\section{$\I_G$--spaces and $\I$--$G$spaces.}\label{singlegroup}

These appendices are devoted to showing the equivalence between $\I$-spaces and $\I_\grps$-spaces mentioned in Remark \ref{IgrpspaceIspacermk}.  In this appendix, we warm up by proving that $\I_G$-spaces are equivalent to  $\I$-$G$spaces for a single group $G$.  This is an orthogonal version of a paper of Shimakawa's \cite{Shimakawa1991}, which proves that $\Gamma_G$-spaces and $\Gamma$-$G$spaces are equivalent.  A version of the equivalence we prove is shown in \cite{MM2002}, but without using the enriched category structure. For a fixed group $G$, this equivalence of categories is a key ingredient in comparing different models of genuine orthogonal $G$-spectra.  In Appendix \ref{sec:globalize-this}, we turn to the result for $\I_\grps$-spaces.

Let $G\Top$ be the category of based $G$-spaces and based equivariant maps between them; this category is enriched in spaces but not in $G$-spaces.  Denote by $\I$ the category $\I_e$ corresponding to the trivial group; this has objects the inner product spaces $\RR^n$.  In Section \ref{basicdefn}, we defined $\I_G$-spaces; we now define $\I$-$G$spaces, our other category of interest.

\begin{defn} An \emph{$\I$--$G$\,space} is a continuous functor $X\col \I\to G\Top$.  
Morphisms of $\I$--$G$spaces are continuous natural transformations.
\end{defn}
Note that the categories of $\I$-$G$spaces and $\I_G$-spaces are both topologically enriched.

There  is a fully faithful functor $\iota\col \I\to \I_G$ given by sending an object $\RR^n\in \I$ to the object $(\RR^n,\iota)\in \I_G$, where (by abuse of notation) $\iota$ denotes the unique homomorphism $G\to O(n)$ that factors through the trivial group.  In Appendix \ref{sec:globalize-this}, we will have call to think of this functor as precomposition with the unique homomorphism $\iota\col G\to e$.
The functor $\iota$ induces a restriction
\[R\col \IsubGspa \to \IGspa,\]
which sends an $\I_G$--space $A$ to its restriction to the trivial representations. 
\begin{rmk}\label{rmk:trivialrepsaretrivialyo}
Any map $f\col \RR^m\to \RR^n$ between trivial representations is $G$--fixed, and thus $f$ must be sent to a $G$--fixed map $A(f)\col A(\RR^m)\to A(\RR^n)$.  This implies that the restriction of $A$ to the category $\I$ in fact lands in $G\Top\subset\Top_G$.
\end{rmk}

In the remainder of this section, we prove the following theorem.
\begin{thrm}\label{singlegroupequivalence} The functor $R\col \IsubGspa \to \IGspa$ is an equivalence of topological categories.
\end{thrm}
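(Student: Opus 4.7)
My plan is to exhibit an explicit quasi-inverse $E\col \IGspa \to \IsubGspa$ and verify that one composite is the identity on the nose while the other comes with a natural isomorphism to the identity. Given an $\I$--$G$space $Y$, define $E(Y)(\RR^n,\rho) := Y(\RR^n)$ as a space equipped with the \emph{twisted} $G$--action $g\cdot v := Y(\rho(g))(g\cdot_Y v)$, and $E(Y)(f) := Y(f)$ on morphisms (noting that every linear isometric isomorphism in $\I_G$ is, by forgetting, a morphism in $\I$). Because each $Y(\rho(g))$ is equivariant for the original $Y$--action, the two operations commute and the group axioms hold; a direct computation using $g.f = \rho(g)\circ f\circ \mu(g^\inv)$ from Section~\ref{basicdefn} gives the $G$--continuity equation $g E(Y)(f) g^\inv = E(Y)(g.f)$. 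Functoriality in $Y$ is likewise routine: a natural $G$--transformation $\eta\col Y\to Z$ has components that intertwine the twisted actions because they commute with each $Y(\rho(g))$ by naturality in $\I$, and all constructions are manifestly continuous, so $E$ is a topologically enriched functor.

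Next I check that $R\circ E = \id$ on the nose: for $\rho=\iota$ the twist $Y(\iota(g))=Y(\id_{\RR^n})$ is trivial, so the twisted action recovers the original one and morphisms are left alone. In the other direction, I construct a natural isomorphism $\epsilon\col E\circ R \xrightarrow{\cong} \id$ whose component at an $\I_G$--space $A$ and $V=(\RR^n,\rho)$ is the map $A(\id_{\RR^n})\col A(\RR^n,\iota)\to A(\RR^n,\rho)$, where $\id_{\RR^n}$ is viewed as a (non-equivariant) morphism in $\I_G$. Since this morphism is an isomorphism in $\I_G$, the component is a homeomorphism. Equivariance follows from the $G$--continuity of $A$ applied to the identity morphism: $g\cdot_{A(V)} A(\id)(g^\inv \cdot_A a) = A(g.\id)(a) = A(\rho(g))(a)$, which after rearranging says precisely that $A(\id)$ intertwines the twisted action on $E(R(A))(V)$ with the original action on $A(V)$. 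Naturality in $V$ is immediate: for any $f\col V\to W$ in $\I_G$ both ways around the naturality square compute $A(f)$.

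The main obstacle, and the only step that is more than bookkeeping, is arranging the equivariance in the twisted-action construction. The point is that the identity $\RR^n\to\RR^n$ is \emph{not} $G$--equivariant as a morphism $(\RR^n,\iota)\to(\RR^n,\rho)$, so any candidate inverse of $R$ must fold the representation data $\rho$ into a modified $G$--action on the $Y$--values; the correct normalization is dictated by the formula $g.f = \rho(g)\circ f\circ \mu(g^\inv)$ for the $G$--action on hom-spaces in $\I_G$. Once this normalization is set up, the unit is strictly the identity, the counit is pointwise an isomorphism in $\I_G$, and everything else is formal. Topological enrichment is automatic since all constructions are pointwise continuous, so $R$ is an equivalence of topological categories.
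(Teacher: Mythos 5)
Your proof is correct, but it takes a genuinely different route from the paper's. The paper constructs $E$ as the topological left Kan extension along the inclusion $\iota\col\I\to\I_G$, so that $EX(V)=\coprod_n \I_G(\RR^n,V)\times X(\RR^n)/\!\sim$; it then deduces that the unit $\Id\to RE$ is an isomorphism from the general fact that Kan extension along a full and faithful inclusion has this property, and verifies the counit is an isomorphism with an explicit inverse $\nu_A(a)=[i,i^\inv_*a]$. You instead build the quasi-inverse by hand: $E(Y)(\RR^n,\rho)$ is the underlying space $Y(\RR^n)$ with the twisted action $g\cdot v=Y(\rho(g))(g\cdot_Y v)$, which you show is equivalent by exhibiting an isomorphism of the ``twist'' with conjugation by the non-equivariant identity $i\col(\RR^n,\iota)\to(\RR^n,\rho)$. (One can check that your twisted space is canonically isomorphic to the paper's coend via $[s,x]\mapsto X(i^\inv s)(x)$, so the constructions agree up to natural isomorphism.) What your approach buys is concreteness and a strict equality $R\circ E=\Id$ on the nose, avoiding any Kan extension formalism; it also highlights clearly that the whole equivalence turns on the single observation that $\id_{\RR^n}$ is an isomorphism in $\I_G$ once $G$-equivariance of morphisms is dropped. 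What the paper's approach buys is uniformity: the Kan extension description is the one that generalizes cleanly to the global setting in Appendix~\ref{sec:globalize-this}, where the functor $E$ must be assembled over all groups simultaneously, and having $E$ defined by a universal property makes the coherence checks there more systematic. Both proofs correctly verify $G$-continuity of $E(Y)$ and equivariance of the counit components using the formula $g.f=\rho(g)\circ f\circ\mu(g^\inv)$, so the core equivariant bookkeeping is the same in spirit.
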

This is essentially the content of \cite[Lemma V.1.1]{MM2002}; our proof is a $G$-enriched version of \cite{MMSS}.

As is usual, the functor $R$ has a left adjoint `extension' functor $E\col \IGspa\to \IsubGspa$ given by left Kan extension.
\begin{defn}\label{defn:extensionG} For $X\in \IGspa$, define $EX\in \IsubGspa$ as the topological left Kan extension $EX=Lan_\iota X$ in the following diagram:
\[
\xymatrix{\I \ar[r]^{\iota}\ar[d]_X& \I_G\ar[dl]^{EX=Lan_\iota X}\\
\Top_G}
\]
\end{defn}
\begin{rmk} As in Remark \ref{rmk:trivialrepsaretrivialyo}, we point out that any functor from $\I\to \Top_G$ must land in the subcategory $G\Top$, which legitimates our regarding $X$ as a functor $\I\to \Top_G$ instead of $\I\to G\Top$.
\end{rmk}

For ease of notation, let $V=(\RR^n,\rho)$ be an object of $\I_G$. Explicitly, the value of $EX$ at $V$ is given by the tensor product of functors
\[
EX(V)=\I_G(-,V)\otimes_{\I}X(-)=\coprod_{n}\I_G(\RR^n,V)\times X(\RR^n)/\sim
\]
where the equivalence relation is given by setting $[st,x]\sim [s,t_*x]$ for a map $t\col \RR^m\to \RR^n$ in $\I$.   $G$ acts diagonally on $\I_G(\RR^n,V)\times X(\RR^n)$; this action is compatible with the equivalence relation because $t$, as a map between trivial representations, induces an equivariant map $t_*$.  Furthermore, it is clear that this construction is functorial in $X$, either from the universal property of left Kan extension or via a direct check using the explicit definition.  

It is straightforward to check that $EX$ is a $G$--continuous functor. 
Let $f\col V\to W$ be a map in $\I_G$, let $g$ be an element of $G$ and $[s,a]\in EX(V)$.  By the definition of the $G$--action on morphisms in $\Top_G$,
\begin{align*}
(g.EX(f))[s,a] &= g(EX(f)[g^\inv sg,g^\inv a]\\
 &=gEX(f)[g^\inv s,g^\inv a]
\end{align*}
since $G$ acts trivially on the source of $s$. Further unraveling of definitions shows 
\begin{align*}
gEX(f)[g^\inv s, a] &= [gfg^\inv sg^\inv,gg^\inv a]\\
 &=[gfg^\inv s,a]
\end{align*}
where the second equality again follows from the trivial $G$--action on the source of $s$; hence $gEX(f)g^\inv=EX(gfg^\inv)$.

Since $E$ is defined by left Kan extension, the functors $R$ and $E$ are adjoint.  Furthermore, since $\iota\col \I \to \I_G$ is the inclusion of a full subcategory, the unit of this adjunction $\Id_{\IGspa}\to RE$ is a natural isomorphism by \cite[Proposition 3.2]{MMSS}.  Thus the following lemma completes the proof of the theorem.
\begin{lemma}
The counit $\epsilon\col ER\to \Id_{\IsubGspa}$ is a natural isomorphism.
\end{lemma}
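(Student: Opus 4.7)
The plan is to show that $\epsilon_V\col ER(A)(V) \to A(V)$ is a $G$-equivariant homeomorphism for every $V\in\I_G$ and every $\I_G$-space $A$, first reducing to the case that $V$ lies in the image of $\iota$ and then constructing an explicit inverse there.

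The reduction rests on the observation that any $V=(\RR^n,\rho)\in\I_G$ is isomorphic in $\I_G$ to the trivial representation $\iota(\RR^n)=(\RR^n,\iota)$ via the linear isometry $\id_{\RR^n}$; this is a morphism in $\I_G$ precisely because morphisms are required only to be linear isometric isomorphisms, not equivariant maps. Applying the functors $ER(A)$ and $A$ to this arrow produces isomorphisms in $\Top_G$ that need not be $G$-equivariant, but naturality of $\epsilon$ places them in a commutative square with $\epsilon_V$ and $\epsilon_{\iota(\RR^n)}$. Since both components of the counit are $G$-equivariant as part of a morphism of $\I_G$-spaces, an equivariant inverse to $\epsilon_{\iota(\RR^n)}$ transports through this square to an equivariant inverse to $\epsilon_V$.

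For $V=\iota(\RR^n)$ I propose the candidate inverse $\eta_V\col A(V)\to ER(A)(V)$ defined by $a\mapsto [\id_{\RR^n}, a]$. The composite $\epsilon_V\circ\eta_V$ is immediately the identity since $\epsilon_V[\id_{\RR^n}, a]=A(\id_{\RR^n})(a)=a$. For the other composite, observe that any morphism $s\col \iota(\RR^m)\to \iota(\RR^n)$ in $\I_G$ lies in the subcategory $\I$, so the defining coend relation $[s'\circ t, x]\sim [s', A(t)(x)]$ for $t$ in $\I$ applies with $s'=\id_{\RR^n}$ and $t=s$ to yield $[s,a]\sim [\id_{\RR^n}, A(s)(a)]$, which is precisely $\eta_V(\epsilon_V[s,a])$. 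Continuity of both maps is automatic from the quotient topology on the coend.

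The one subtlety requiring real care is $G$-equivariance of $\eta_V$. The diagonal action on the coend gives $g.[\id_{\RR^n}, a]=[g.\id_{\RR^n}, g.a]$, where the action on the morphism is $g.\id_{\RR^n}=\iota(g)\circ\id_{\RR^n}\circ\iota(g)^{-1}=\id_{\RR^n}$ because both source and target are trivial $G$-representations; hence $g.[\id_{\RR^n}, a]=[\id_{\RR^n}, g.a]=\eta_V(g.a)$. Combined with the naturality reduction, this completes the argument.
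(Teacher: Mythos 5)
Your proof is correct and is essentially the paper's own argument in a slightly repackaged form. Unwinding your ``reduce to $V$ in the image of $\iota$ and transport'' step gives precisely the paper's inverse $\nu_A(a)=[i,i^\inv_*a]$, where $i\colon\RR^d\to V$ is the non-equivariant identity map; your transported inverse is the composite $a\mapsto i^\inv_*a\mapsto[\id,i^\inv_*a]\mapsto[i,i^\inv_*a]$, which is the same thing. One small imprecision worth fixing: the phrase ``an equivariant inverse transports through this square to an equivariant inverse'' suggests that equivariance is carried across the naturality square, but the vertical maps in that square are \emph{not} equivariant, so the transport only gives a continuous point-set inverse. Equivariance of the inverse then follows independently from the fact you already noted --- $\epsilon_V$ is $G$-equivariant as a component of a morphism in $\IsubGspa$, and a set-theoretic inverse to an equivariant bijection is automatically equivariant. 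This is exactly how the paper phrases it, and it makes your direct check that $g.[\id_{\RR^n},a]=[\id_{\RR^n},g.a]$ unnecessary (though of course not wrong).
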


\begin{proof}
Let $A\col \I_G\to \Top_G$ be an object of $\IsubGspa$.  For a $G$--representation $V=(\RR^d,\rho)$, the space $ERA(V)$ is given by 
\[ERA(V)=\coprod_n \I_G(\RR^n,V)\times A(\RR^n)/\sim\]
and the $V$th-component of ${\epsilon_{A}}$ is given by $[s,a]\mapsto s_*a$, where $s_*\col A(\RR^n)\to A(V)$ is the image of $s\in\I_G(\RR^n,V)$ under $A$. 
Note that functoriality of $A$ shows that $\epsilon_A$ is well-defined.

For each $A\in \IsubGspa$, $\epsilon_A$ must be a $G$--natural transformation; thus each component of ${\epsilon_A}$ must be a $G$-map.  This follows from the fact that $A$ preserves the $G$-enrichment, so that $A(g.s)=g.s_*$.

We now show $\epsilon_A$ is an isomorphism.
Suppose $V=(\RR^d,\rho)$.  Let $\RR^d\xarr{i} V\xarr{i^\inv} \RR^d$ be the (non-equivariant) maps in $\I_G$ that are the identity on the underlying vector space $\RR^d$. We prove that the counit ${\epsilon_A}\col ERA(V)\to A(V)$ is an isomorphism by defining a continuous inverse $\nu$.  For $a\in A(V)$, let 
\[\nu_A(a)=[i,i^\inv_*a].\]
Continuity of $i^\inv_*$ implies $\nu$ is continuous. It follows directly from the definition of $\epsilon$ and $\nu$ that $\epsilon_A\circ \nu_A\col A(V)\to A(V)$ is the identity map.  The other composition is also the identity: given $[s,a]\in ERA(V)$,
\begin{align*}
\nu_A(\epsilon_A[s,a]) &= [i,i^\inv_* s_*a]\\
 &\sim [i(i^\inv s),a]\\
 &=[s,a].
\end{align*}
Thus $\epsilon$ is a natural isomorphism. Moreover, since $\nu$ provides a point-set inverse to the equivariant map $\epsilon$, $\nu$ is also equivariant.
\end{proof}
We conclude that $R$ and $E$ provide an adjoint equivalence of the categories $\IGspa$ and $\IsubGspa$.

\section{The global version}\label{sec:globalize-this}

We now turn to the global version, using our definitions from Section \ref{sec:defnglobalspec}. 
The category of $\Igrpspa$ has been defined in Definition \ref{defn:Igrpspa}; we now define the other category in our comparison.
\begin{defn} An \emph{$\I$--space} is a continuous functor $X\col \I\to \Top$.  
Morphisms of $\I$--spaces are continuous natural transformations. 
\end{defn}

The global analogue of Theorem \ref{singlegroupequivalence} is the following.

\begin{thrm}\label{thrm:globalequiv} The category of $\I_\grps$--spaces is equivalent to the category of $\I$--spaces.  This equivalence is induced by an adjoint pair of functors $R\col \Igrpspa\to \Ispa$ and $E\col \Ispa\to \Igrpspa$.
\end{thrm}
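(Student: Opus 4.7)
The plan is to build this equivalence directly on top of the single-group equivalence of Theorem \ref{singlegroupequivalence}, using Remark \ref{IgrpspaceIspacermk} to bridge the global and single-group settings. Define $R \col \Igrpspa \to \Ispa$ by $R(A) = A_e$, noting that $\I_e = \I$ and that $A_e$ lands in $\Top_e = \Top$ by construction. For $E$, given an $\I$--space $X$, set $E(X)_G$ to be the left Kan extension $\operatorname{Lan}_\iota X$ of Definition \ref{defn:extensionG}, where $X$ is viewed as a functor $\I \to \Top_G$ by endowing each $X(\RR^n)$ with the trivial $G$-action. By Appendix \ref{singlegroup}, this is a $G$-continuous functor $\I_G \to \Top_G$, so it remains to equip $E(X)$ with the compatibility data $\phi^{E(X)}_\alpha$.

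For each homomorphism $\alpha \col G \to H$, I need a natural isomorphism $\alpha^* \circ E(X)_H \Rightarrow E(X)_G \circ \alpha^*$. The explicit formula
\[
E(X)_H(V) = \coprod_n \I_H(\RR^n, V) \times X(\RR^n) / \sim
\]
makes this manageable: since $\alpha^* \col \I_H \to \I_G$ is the identity on underlying vector spaces and on morphisms (it only alters the homomorphism $\rho$), there is a tautological point-set identification $\alpha^* E(X)_H(V) = E(X)_G(\alpha^* V)$ for each $V \in \I_H$. The $G$-actions agree on both sides because $X(\RR^n)$ carries trivial action and the $G$-action on $\I_G(\RR^n, \alpha^*V)$ is obtained from the $H$-action on $\I_H(\RR^n, V)$ precisely by restriction along $\alpha$. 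Take $\phi^{E(X)}_\alpha$ to be this identification; the cocycle and unit conditions $\phi^{E(X)}_\alpha \circ \phi^{E(X)}_\beta = \phi^{E(X)}_{\alpha\beta}$ and $\phi^{E(X)}_{\id} = \id$ then hold on the nose.

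For the unit $\eta \col \Id_{\Ispa} \to RE$, observe that $RE(X) = E(X)_e = \operatorname{Lan}_{\id_\I}(X) \cong X$, so $\eta$ is a natural isomorphism essentially by construction (or as a special case of the single-group result). For the counit $\epsilon \col ER \to \Id_{\Igrpspa}$, the $G$-component $\epsilon_G \col E(A_e)_G \to A_G$ must be a natural isomorphism of $\I_G$-spaces. By Remark \ref{IgrpspaceIspacermk}, $A_e$ agrees with the restriction $R_G(A_G)$ of $A_G$ to trivial $G$-representations (with its canonical trivial action), so the counit $\epsilon_G$ coincides with the single-group counit from Appendix \ref{singlegroup}, which is a natural isomorphism by the lemma proved there. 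The inverse $\nu_A$ described in Appendix \ref{singlegroup}, sending $a \in A_G(V)$ to $[i, i^\inv_* a]$, is again available and continuous in each $G$.

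The main obstacle will be verifying that $\epsilon \col ER \to \Id$ is a morphism of $\I_\grps$-spaces, i.e., that its components commute with the structure isomorphisms $\phi^{ER(A)}_\alpha$ and $\phi^A_\alpha$ in the square (\ref{phicompatiblenattrans}). This reduces to a diagram chase: because $\phi^{ER(A)}_\alpha$ is the identity on underlying spaces and because $A$'s structure map $\phi^A_\alpha$ is a natural isomorphism intertwining $A_H$ and $A_G$ on trivial representations in a compatible way (again by Remark \ref{IgrpspaceIspacermk}), the formula $\epsilon_G[s, a] = s_* a$ makes the relevant square commute. The parallel check for $E$ on morphisms of $\I$-spaces, and for topological enrichment, is routine given that left Kan extension is functorial and continuous in $X$. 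Together these steps yield the adjoint equivalence $R \dashv E$.
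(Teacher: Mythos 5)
Your proposal follows the same overall route as the paper: $R(A)=A_e$, $E(X)_G=\operatorname{Lan}_\iota X$ with trivial action, tautological compatibility isomorphisms for $E(X)$, unit from $E_eX\cong X$, counit built from the single-group construction, and the same componentwise inverse $\nu$. That matches the paper essentially step for step.

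There is, however, one genuine imprecision that you should repair. You write the counit as $\epsilon_G[s,a]=s_*a$ for $[s,a]\in\I_G(\RR^n,V)\times A_e(\RR^n)$, but this does not type-check: $s_*$ is the map $A_G(\RR^n)\to A_G(V)$ obtained by applying $A_G$, while $a$ lives in $A_e(\RR^n)$, which is only \emph{naturally isomorphic} to $A_G(\iota^*\RR^n)$ via the structure isomorphism $\phi^A_\iota$ coming from Remark \ref{IgrpspaceIspacermk} — it is not literally the same space. Your phrase that $A_e$ ``agrees with'' the restriction of $A_G$ to trivial representations glosses over exactly this isomorphism. The paper defines $\epsilon_A[s,a]=s_*\phi^A_\iota(a)$, and carrying $\phi^A_\iota$ explicitly is what makes the later verifications go through: well-definedness on equivalence classes uses naturality of $\phi^A_\iota$ in $\RR^n$; compatibility of $\epsilon$ with the $\phi_\alpha$'s (your ``relevant square'') uses the cocycle identity $\phi^A_\alpha\circ\phi^A_\beta=\phi^A_{\alpha\beta}$ together with the observation that $\iota_G=\iota_H\circ\alpha$; and the inverse must be $\nu_A(a)=[i,\phi^{-1}_\iota i^{-1}_*a]$, not $[i,i^{-1}_*a]$, or the composite $\epsilon_A\nu_A$ will not collapse to the identity. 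Finally, you assert the adjunction at the end without checking the triangle identities; the paper does verify both triangles, and the check for the counit triangle again uses $\phi^{EX}_\iota$ explicitly, so this should not be waved away as routine.
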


Recall that an $\I_\grps$--space $A$ can be thought of as a collection of suitably compatible functors 
\[A_G\col \I_G\to \Top_G.\]
 In particular, we have a functor $A_e\col \I\to \Top$ corresponding to the trivial group $e$.  The map $A\mapsto A_e$ gives a well-defined forgetful functor $R\col \Igrpspa\to \Ispa$; this is one of the functors in our equivalence.

The functor $E\col \Ispa\to \Igrpspa$ ultimately comes from the left Kan extensions of Definition \ref{defn:extensionG}. Let $X\col \I\to \Top$ be an $\I$--space.  For any given finite group $G$, Definition \ref{defn:extensionG} provides a functor $E_GX\col \I_G\to \Top_G$ via left Kan extension.  We first must show that these functors fit together to give a well-defined functor of enriched indexed categories $EX\col \I_\grps\to\Top_\grps$; the required enriched indexed functor is then defined by setting $(EX)_G=E_GX$.
\begin{prop}Given an $\I$--space $X$, the functors $E_GX$ of Definition \ref{defn:extensionG} are the components of an $\I_\grps$--space.
\end{prop}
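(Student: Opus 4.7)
The plan is to construct the required structure isomorphisms $\phi^{EX}_\alpha$ pointwise from the coend formula for the Kan extension, and to check that coherence is essentially free because $\alpha^*$ acts as the identity on underlying data. For a homomorphism $\alpha\colon G\to H$ and $V=(\RR^d,\rho)\in \I_H$, I want to produce a natural $G$-homeomorphism
\[
\phi^{EX}_\alpha\colon \alpha^*\bigl(E_HX(V)\bigr)\xrightarrow{\cong} E_GX(\alpha^*V).
\]
Since $\alpha^*\colon \Top_H\to \Top_G$ is the identity on underlying spaces and preserves colimits, applying it to $E_HX(V)=\coprod_n \I_H(\RR^n,V)\times X(\RR^n)/{\sim}$ yields the same underlying quotient, with $G$-action only on the $\I_H$-factor via $\alpha$.

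The key observation is the equality $\alpha^*\I_H(\RR^n,V)=\I_G(\RR^n,\alpha^*V)$ as $G$-spaces: both consist of the same set of linear isometric isomorphisms $\RR^n\to \RR^d$, and in each case $g\in G$ acts on such an $f$ by $f\mapsto \rho(\alpha(g))\circ f$, since $\RR^n$ carries the trivial representation on either side. Moreover, the coend equivalence relation $[st,x]\sim[s,t_*x]$ is generated by morphisms $t$ in $\I$, which are entirely unaffected by the change of group, so the two quotients coincide on the nose. This defines $\phi^{EX}_\alpha$ as the identity-on-representatives map $[s,x]\mapsto[s,x]$, which is manifestly a $G$-homeomorphism. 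Naturality in $V$ is immediate: for a morphism $f\colon V\to V'$ in $\I_H$, both $E_HX(f)$ and $E_GX(\alpha^*f)$ act by post-composition with the same underlying linear isometry on the $\I$-factor of the coend, so the naturality square commutes on representatives. The $G$-continuity of each composite follows from the $G$-continuity of $E_HX$ established in Appendix~\ref{singlegroup}, together with the fact that $\alpha^*$ preserves this property.

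Finally I would verify the cocycle condition $\phi^{EX}_\alpha\circ\phi^{EX}_\beta=\phi^{EX}_{\alpha\beta}$ and the unit condition $\phi^{EX}_{\id}=\id$. Because the transition functors $\alpha^*$ on both $\I_\grps$ and $\Top_\grps$ compose strictly, and each $\phi^{EX}_\alpha$ is literally the identity on representatives, both compatibility conditions reduce to the tautologous equality of the same map with itself. The main obstacle is only the careful bookkeeping across three nested structures---the coend defining $E_GX$, the $G$-enrichment of $\I_G$-spaces, and the change-of-group functor $\alpha^*$---but conceptually nothing new happens: since $\alpha^*$ acts as the identity on underlying data, the pointwise Kan extensions automatically assemble into a functor of enriched indexed categories $EX\colon\I_\grps\to\Top_\grps$.
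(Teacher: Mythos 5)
Your proposal is correct and follows essentially the same route as the paper's proof: the structure isomorphism $\phi^{EX}_\alpha$ is the identity on representatives in the coend, and both naturality and the cocycle/unit conditions are tautological because $\alpha^*$ acts as the identity on underlying data. Your additional care in checking that $\alpha^*\I_H(\RR^n,V)=\I_G(\RR^n,\alpha^*V)$ as $G$-spaces (both giving $g.f=\rho(\alpha(g))\circ f$ since $\RR^n$ carries a trivial representation) just spells out the step the paper compresses into the remark that $X(\RR^n)$ has trivial group action.
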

\begin{proof}
We must define the natural isomorphism  $G$--functors 
\[\phi^{EX}_\alpha\col \alpha^* \circ E_HX\to E_GX\circ \alpha^*\]
corresponding to the homomorphism $\alpha$. Let $V\in \I_H$.  Because $X(\RR^n)$ has trivial group action, we find that $\alpha^*(E_H X(V))=E_GX(\alpha^*V)$ as $G$--spaces.
It is clear that this identification is natural in $V$ and respects composition of group homomorphisms.  Hence the collection $\{E_GX\}$ forms an $\I_\grps$--space.
\end{proof}

We now turn to the proof of Theorem \ref{thrm:globalequiv}.  We begin by defining the unit and counit necessary for the adjunction. The unit has essentially the same structure as the unit in of the adjunction from Section \ref{singlegroup}.
\begin{defn} Let $\eta\col \Id_{\Ispa}\to RE$ be the natural transformation defined using the natural isomorphism of $\I$-spaces $X\xrightarrow{\cong} REX=E_eX$ that comes from the fact that $\I_e=\I$.  This is evidently natural in $X$, and provides a natural isomorphism $\eta\col \Id_{\Ispa}\to RE$.
\end{defn}
\begin{defn}\label{defn:globalcounit}
Let $\epsilon\col ER\to \Id_{\Igrpspa}$ be the natural transformation defined as follows.  Let $\iota\col G\to e$ be the unique homomorphism.  For $A\in\Igrpspa$, let $\phi^A_\iota\col \iota^*A_e(\RR^n)\to A_G\circ\iota^*$ be the natural isomorphism of Diagram (\ref{grpscompatibility}) corresponding to $\iota$.. The indexed natural transformation $\epsilon_A\col ERA\to A$ has $V$th component $\epsilon_A\col ERA(V)\to A(V)$ defined by
\[ \epsilon_A[s,a]=s_*\phi^A_\iota(a)\]
for $(s,a)\in \I_G(\RR^n,V)\times A_e(\RR^n)$.
\end{defn}
\begin{lemma} Definition \ref{defn:globalcounit} produces a well-defined continuous natural transformation \[\epsilon\col ER\to \Id_{\Igrpspa}.\]
\end{lemma}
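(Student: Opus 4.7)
The plan is to verify that for each $A\in\Igrpspa$, the family of maps $(\epsilon_A)_G\col (ERA)_G(V)\to A_G(V)$ given by $[s,a]\mapsto s_*\phi^A_{\iota_G}(a)$ is a morphism of $\I_\grps$-spaces, and that this assignment is natural in $A$. I break this into five tasks: (i) well-definedness on the coend, (ii) continuity, (iii) $G$-equivariance and naturality in $V$ at a fixed $G$, (iv) compatibility with the indexed structure maps $\phi$ across groups, and (v) naturality in $A$.

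Tasks (i)--(iii), at a fixed $G$, are mostly formal unwinding. For (i), the equivalence relation on the coend identifies $[st,a]$ with $[s,t_*a]$ for $t\col\RR^m\to\RR^n$ in $\I$; naturality of $\phi^A_{\iota_G}\col\iota_G^*A_e\Rightarrow A_G\circ\iota_G^*$ applied to $t$ gives $\phi^A_{\iota_G}(t_*a)=t_*\phi^A_{\iota_G}(a)$, and then functoriality of $A_G$ yields $s_*\phi^A_{\iota_G}(t_*a)=(st)_*\phi^A_{\iota_G}(a)$. Continuity is immediate from continuity of $\phi^A_{\iota_G}$ and of the evaluation pairing $(s,b)\mapsto s_*b$. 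For $G$-equivariance, the essential input is Remark \ref{IgrpspaceIspacermk}: the element $\phi^A_{\iota_G}(a)\in A_G(\iota_G^*\RR^n)$ lives in a trivial representation and so is $G$-fixed. Combining this with the $G$-continuity identity $A_G(\rho(g)\circ s)=g\cdot A_G(s)\cdot g^{-1}$ and the coend action $g\cdot[s,a]=[\rho(g)\circ s,a]$ (here using again that $a$ has trivial $G$-action) shows $\epsilon_A(g\cdot[s,a])=g\cdot\epsilon_A[s,a]$. Naturality in $V$ is immediate from $(ERA)_G(f)[s,a]=[fs,a]$ and functoriality of $A_G$.

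The main obstacle is task (iv), compatibility with $\phi$ under change of group. For $\alpha\col G\to H$ and $V\in\I_H$, I would chase $[s,a]\in\alpha^*(ERA)_H(V)$ around the square in Diagram (\ref{phicompatiblenattrans}) with $f=\epsilon_A$. The top-then-right route yields $\phi^A_\alpha(s_*\phi^A_{\iota_H}(a))$, which by naturality of $\phi^A_\alpha$ at the morphism $s\in\I_H(\RR^n,V)$ rewrites as $(\alpha^*s)_*\phi^A_\alpha(\phi^A_{\iota_H}(a))$. The down-then-bottom route, using the identification $\phi^{ERA}_\alpha[s,a]=[\alpha^*s,a]$ from the preceding proposition, yields $(\alpha^*s)_*\phi^A_{\iota_G}(a)$. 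These expressions agree because $\iota_H\circ\alpha=\iota_G$ and the cocycle condition on $\phi^A$ gives $(\phi^A_\alpha\iota_H^*)\circ(\alpha^*\phi^A_{\iota_H})=\phi^A_{\iota_H\alpha}=\phi^A_{\iota_G}$, so $\phi^A_\alpha(\phi^A_{\iota_H}(a))=\phi^A_{\iota_G}(a)$ when evaluated at $\RR^n$.

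Finally, for task (v): given $f\col A\to B$ in $\Igrpspa$, I would check $\epsilon_B\circ ER(f)=f\circ\epsilon_A$ componentwise. On $[s,a]$ at the group $G$, the left side is $s_*\phi^B_{\iota_G}(f_e(a))$, while the right side is $f_G(s_*\phi^A_{\iota_G}(a))=s_*f_G(\phi^A_{\iota_G}(a))$ by the (non-equivariant) naturality of $f_G$ with respect to $s$. These coincide by the compatibility of $f$ with the indexed structure, namely Diagram (\ref{phicompatiblenattrans}) for the homomorphism $\alpha=\iota_G$, which asserts $f_G\circ\phi^A_{\iota_G}=\phi^B_{\iota_G}\circ\iota_G^*f_e$.
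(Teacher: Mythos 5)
Your proof is correct and takes essentially the same approach as the paper: well-definedness via naturality of $\phi^A_{\iota}$, compatibility across groups via naturality of $\phi^A_\alpha$ together with the cocycle identity $\phi^A_\alpha\circ\phi^A_{\iota_H}=\phi^A_{\iota_G}$ (using $\iota_H\circ\alpha=\iota_G$), and naturality in $A$ via Diagram (\ref{phicompatiblenattrans}) for the homomorphism $\iota_G$. The only cosmetic difference is that you spell out the fixed-$G$ checks of continuity, $G$-equivariance, and naturality in $V$ (usefully invoking Remark \ref{IgrpspaceIspacermk} for the $G$-fixedness of $\phi^A_{\iota_G}(a)$), whereas the paper delegates these to the single-group argument of Appendix \ref{singlegroup}.
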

\begin{proof}We must check well-definedness at several levels.
  First, we show that each component of ${\epsilon_A}$ is well defined on the equivalence classes in $ERA(V)$.  We must then check that $\epsilon_A$ is a morphism of enriched indexed functors.  Finally, we show that $\epsilon$ is natural with respect to morphisms of $\Igrpspa$.

The fact that ${\epsilon_A}$ is well defined on equivalence classes comes from the naturality of $\phi^A_\iota$.  Specifically, let $t\col \RR^m\to \RR^n$, $s\col \RR^n\to V$ and $a\in A_e(\RR^m)$ so that $[st,a]\sim [s,t_*a]$. Since $\phi^A_\iota$ is natural with respect to maps of trivial representations,
\begin{align*}
{\epsilon_A}(st,a)&=s_*t_*{\phi^A_\iota} a\\
&=s_*{\phi^A_\iota} (t_*a)\\
&={\epsilon_A}(s,t_*a)
\end{align*}
as required.

Next we show that $\epsilon_A$ is a morphism in the category $\Igrpspa$, that is,  a natural transformation of enriched indexed functors.  For each $G$, the argument of Appendix \ref{singlegroup} shows that our definition of $\epsilon_A$ restricts to a natural transformation of continuous $G$-functors $(ERA)_G=E_GRA\to A_G$.  We must show that these natural transformations commute with the compatibility transformations $\phi^A_\alpha$ and $\phi^{ERA}_\alpha$ associated to a homomorphism $\alpha\col G\to H$.  That is, for each $\alpha\col G\to H$, we must show that we have a commutative diagram of natural transformations
\[
\xymatrix{ \alpha^*\circ E_H RA\ar@{=>}[r]^{\alpha^*\circ \epsilon_A} \ar@{=>}[d]_{\phi^{ERA}_\alpha} & \alpha^*\circ A_H\ar@{=>}[d]^{\phi^A_\alpha} \\
E_G RA\circ \alpha^*\ar@{=>}[r]^{\epsilon_A\circ\alpha} & A_G \circ \alpha^*}
\]
as in Diagram (\ref{phicompatiblenattrans}).

Consider $V\in \I_H$.  The map 
\[{\phi^A_\alpha}\circ (\alpha^*\circ \epsilon_A) \col \alpha^* E_H RA(V)\to A_G(\alpha^* V)\]
sends an element $[s,a]$ to the image of $a\in \alpha^*A_e(\RR^n)$ under the map displayed along the top and right of the following diagram:
\[
\xymatrix{\alpha^*A_e(\RR^n)\ar[r]^{\phi_{\iota_H}} \ar[rd]_{\phi_{\iota_G}} & \alpha^* A_H(\RR^n) \ar[r]^{s_*}\ar[d]^{\phi_\alpha}& \alpha^* A_h(\RR^n)\ar[d]^{\phi_\alpha}\\
 & A_G(\RR^n)\ar[r]^{s_*} &A_G(\alpha^* V)
}
\]
The map \[ {\epsilon_A}\circ {\phi^{ERA}_\alpha} \col \alpha^*\circ E_HRA(V)\to A_G(\alpha^*V)\]
is given by sending $(s,a)\in \I_H(\RR^n,V)\times A_e(\RR^n)$ to the image of $a$ under the composite along the left and bottom of the same diagram.  Commutativity of this diagram follows from naturality of $\phi_\alpha$ and the compatibility requirement on the $\phi$'s: note that $\iota_G=\iota_H\circ \alpha$. \

Lastly, we must show that $\epsilon$ is natural with respect to morphisms in $\Igrpspa$.  Let $f\col A\to B$ be such a morphism.  Consider $V\in \I_G$. We show that the components of $(f\circ\epsilon_A)$ and $\epsilon_B\circ ERf$ at $V$ are equal.  Consider the following diagram:
\[
\xymatrix{ A_e(\RR^n)\ar[r]^{\phi^A_\iota}\ar[d]_{f_e} & A_G(\RR^n)\ar[r]^{s_*}\ar[d]_{f_G} & A_G(V)\ar[d]_{f_G}\\
 B_e(\RR^n)\ar[r]^{\phi^B_\iota} & B_G(\RR^n)\ar[r]^{s_*} & B_G(V)}
\]
The left square commutes because of the compatibility of $f$ with the natural isomorphisms $\phi^A$ and $\phi^B$ as in Diagram (\ref{phicompatiblenattrans}).  The right square commutes by naturality of $f_G$. If $[s,a]\in \I_G(\RR^n, V)\times A_e(\RR^n)$, then $f(\epsilon_A[s,a])$ is the image of $a$ under  the top and right maps in this diagram; $\epsilon_B(ERf[s,a])$ is the image of $a$ under the left and lower maps.  Thus  $\epsilon$ is natural in maps $f\col A\to B$ of $\I_\grps$--spaces. 
 This completes the proof that $\epsilon$ is a well-defined natural transformation $\epsilon\col ER\to \Id_{\Igrpspa}$.
\end{proof}

To complete the proof of Theorem \ref{thrm:globalequiv}, we must show that $\epsilon$ is a natural isomorphism.  We must also show that $\epsilon$ and $\eta$ satisfy the triangle identities.

\begin{proof}[{Proof of Theorem \ref{thrm:globalequiv}}]
We define an inverse to $\epsilon$.  For $V=(\RR^d,\rho)\in \I_G$, let $\RR^d\xrightarrow{i} V\xrightarrow{i^\inv} \RR^d$ be the non-equivariant isomorphisms given by forgetting the $G$--action on $V$.
 For each $A\in\Igrpspa$, define $\nu_A\col A\to ERA$ at $V\in \I_G$ by
\[\nu_A(a)=[i,\phi^\inv_\iota i^\inv_*a].\]
This map is continuous because $i^\inv_*$ and $\phi^\inv_\iota$ are both continuous.

Consider the composites $\epsilon_A\nu_A$ and $\nu_A\epsilon_A$: for $a\in A_G(V)$ and $[s,a]\in E_GA(V)$
\begin{align*}
\epsilon_A(\nu_A(a)) &=i_*\phi_\iota (\phi^\inv_\iota i^\inv_* a)\\
  &=a.
\end{align*}
Naturality of $\phi_\iota$ implies $ (i^\inv s)_*\phi_\iota=\phi_\iota(i^\inv s)_*$, which provides the third equivalence in the calculation
\begin{align*}
\nu_A(\epsilon_A[s,a]) &=[i,\phi^\inv_\iota i^\inv_*(s_*\phi_\iota a)]\\
&=[i,\phi^\inv_\iota(i^\inv s)_*\phi_\iota a]\\
&=[i,\phi^\inv_\iota\phi_\iota (i^\inv s)_* a]\\
&=[i,(i^\inv s)_* a]\\
&\sim[ii^\inv s, a]\\
&=[s,a].
\end{align*}
Thus both composites are the identity; we further need $\nu$ to be an enriched indexed functor.  It is standard (see \cite[page 16]{MacLane1998}) to show that component-wise inverses to a natural transformation define an inverse natural transformation; thus, at each $G$, $\nu_A\col A_G\to E_GRA$ is an inverse $G$--natural transformation to $\epsilon_A$. Compatibility of $\nu_A$ with the natural isomorphisms $\phi$ as in Diagram (\ref{phicompatiblenattrans}) follows by a similar argument, as does the naturality of $\nu$ with respect to morphisms $A\to B$ in $\Igrpspa$.  Thus we conclude that $\epsilon\col ER\to\Id_{\Igrpspa}$ is a natural isomorphism.

Finally we show that $\eta$ and $\epsilon$ satisfy the the triangle diagrams, so that they are indeed adjoint.  We must show that the triangle diagrams
\[
\xymatrix{ R\ar@{=}[dr]\ar[r]^{\eta\Id_R} & RER\ar[d]^{\Id_R\epsilon} \\
& R}
\textrm{\qquad and\qquad }
\xymatrix{E\ar@{=}[dr]\ar[r]^{\Id_E\eta} & ERE\ar[d]^{\epsilon\Id_E}\\
& E}\]
commute.
If $A\in \Igrpspa$, $\RR^n\in\I$, and $a\in RA(\RR^n)=A_e(\RR^n)$, then the first diagram sends
\[ a \mapsto [\id,a] \mapsto \id_*{\phi^A_{\id_e}}_n(a)=a\]
Consider the second triangle diagram. Let $X\in\Ispa$, $V\in \I_G$ and $x\in X(\RR^n)$.  By the definitions of $\epsilon$ and $\eta$, the  class $[s,x]\in E(V)$ is sent to 
\[ [s,x]\mapsto [s,[\id,x]]
\mapsto s_*{\phi^{EX}_\iota}_n[\id,x]=[s,x]\]
Hence the triangle identities are satisfied, so $E$ and $R$ define and adjoint equivalence of categories
\[ \xymatrix{R\col \Igrpspa \ar@<1ex>[r]& \Ispa :\!E \ar@<1ex>[l]}.\]
\end{proof}

\begin{rmk}
To maintain consistency with the main body of this paper, we have required the maps in $\I_G$ and $\I_\grps$ to be linear isometric isomorphisms throughout, as is usual in defining equivariant orthogonal spectra.  However, all proofs in Appendices \ref{singlegroup} and \ref{sec:globalize-this} work just as well if we allow our maps to merely be linear isometries.
\end{rmk}

\bibliography{generalrefs}{}
\bibliographystyle{amsplain}

\end{document}